\newenvironment{enumeratei}{\begin{enumerate}[\upshape (a)]}
    {\end{enumerate}}
\begin{document}

\newtheorem{theorem}{Theorem}[section]

\newtheorem{lemma}[theorem]{Lemma}

\newtheorem{proposition}[theorem]{Proposition}

\newtheorem{corollary}[theorem]{Corollary}
\newtheorem*{thmA}{Theorem A}
\newtheorem*{thmB}{Theorem B}
\newtheorem*{corC}{Corollary}
\newtheorem*{proD}{Proposition} 

\theoremstyle{definition}
\newtheorem{definition}[theorem]{Definition}

\newtheorem{question}[theorem]{Question}

\newtheorem{conjecture}[theorem]{Conjecture}

\newcommand{\F}{\ensuremath{\mathbb F}}
\newcommand{\Ef}{\ensuremath{\mathbb E}}
\newcommand{\K}{\mathbb K}
\newcommand{\N}{\mathbb N}

\newcommand{\R}{\mathcal R}
\def\irr#1{{\rm Irr}(#1)}
\def\ibr#1#2{{\rm IBr}_#1(#2)}
\def\cs#1{{\rm cs}(#1)}
\def\cd#1{{\rm cd}(#1)}
\def\m#1{{\rm m}(#1)}
\def\n#1{{\rm n}(#1)}
\def\cent#1#2{{\bf C}_{#1}(#2)}
\def\hall#1#2{{\rm Hall}_#1(#2)}
\def\syl#1#2{{\rm Syl}_#1(#2)}
\def\nor{\trianglelefteq\,}
\def\norm#1#2{{\bf N}_{#1}(#2)}
\def\NA#1#2{{\Lambda}_{#1}(#2)}
\def\oh#1#2{{\bf O}_{#1}(#2)}
\def\zent#1{{\bf Z}(#1)}
\def\hyperzent#1{{\bf {Z_{\infty}}}(#1)}
\def\sbs{\subseteq}
\def\gen#1{\langle#1\rangle}
\def\aut#1{{\rm Aut}(#1)}
\def\alt#1{{\rm Alt}(#1)}
\def\sym#1{{\rm Sym}(#1)}
\def\out#1{{\rm Out}(#1)}
\def\gv#1{{\rm Van}(#1)}
\def\fit#1{{\bf F}(#1)}
\def\fitt#1{{\bf F}_2(#1)}
\def\diam#1{{\rm diam}(#1)}
\def\frat#1{{\bf \Phi}(#1)}
\def\GF#1{{\rm GF}(#1)}
\def\SL#1{{\rm SL}(#1)}
\def\V#1{{\rm V}(#1)}
\def\Vv#1{{\rm V_v}(#1)}
\def\E#1{{\rm E}(#1)}
\def\Ev#1{{\rm E_v}(#1)}
\def\gammava#1{{\Gamma_{\rm v}}(#1)}
\def\nfr#1{\widehat{#1}^{*}}

\title{On Vanishing Class Sizes in Finite Groups}

\author[M. Bianchi et al.]{Mariagrazia Bianchi}
\address{Mariagrazia Bianchi, Dipartimento di Matematica F. Enriques,\newline
Universit\`a degli Studi di Milano, via Saldini 50,
20133 Milano, Italy.}
\email{mariagrazia.bianchi@unimi.it}

\author[]{Julian M.A. Brough}
\address{Julian M.A. Brough, Technische Universit\"at Kaiserslautern,\newline 67663, Kaiserslautern, Germany.}
\email{brough@mathematik.uni-kl.de}

\author[]{Rachel D. Camina}
\address{Rachel D. Camina, Fitzwilliam College, Cambridge, CB3 0DG,
UK.} \email{rdc26@dpmms.cam.ac.uk}

\author[]{Emanuele Pacifici}
\address{Emanuele Pacifici, Dipartimento di Matematica F. Enriques,
\newline Universit\`a degli Studi di Milano, via Saldini 50,
20133 Milano, Italy.}
\email{emanuele.pacifici@unimi.it}

\thanks{The first and the fourth author are partially supported by the Italian INdAM-GNSAGA, and by the PRIN 2015TW9LSR\_006 ``Group Theory and Applications".}
\subjclass[2000]{20E45}

\begin{abstract}
Let \(G\) be a finite group. An element \(g\) of \(G\) is called a \emph{vanishing element} if there exists an irreducible character \(\chi\) of \(G\) such that \(\chi(g)=0\); in this case, we say that the conjugacy class of \(g\) is a vanishing conjugacy class. In this paper, we discuss some arithmetical properties concerning the sizes of the vanishing conjugacy classes in a finite group. 
\end{abstract}

\maketitle

\section{Introduction}

Many authors have investigated the relationship between the structure of a finite group $G$ and arithmetical data connected to $G$.
The arithmetical data can take various forms: for example, authors have considered the set of conjugacy class sizes, or the set of character degrees. The link between these different sets is also of interest, as demonstrated by the following result by
C. Casolo and S. Dolfi. Suppose $p$ and $q$ are distinct primes and $pq$ divides the degree of some irreducible complex character of \(G\);
then $pq$ also divides the size of some conjugacy class of \(G\) \cite[Theorem A]{casolo}.
As an important step in the proof of this result, the authors consider groups for which $p$ and $q$ both divide a conjugacy class size but $pq$ does not, and they show that such groups are $\{p,q\}$-solvable \cite[Theorem B(i)]{casolo}.

Recently, instead of considering all conjugacy class sizes, 
authors have been considering a subset of conjugacy class sizes ``filtered" by the irreducible characters, namely, the set of 
{\it vanishing conjugacy class sizes} (see \cite{brough1}, \cite{brough2}, \cite{dolfi} and also \cite {spiga} for related
properties of vanishing elements). An element
$g \in G$ is called a vanishing element if
there exists an irreducible character $\chi$ of $G$ such that $\chi(g) =0$, and the conjugacy class of such an element is called a vanishing conjugacy class of $G$. 
Motivated by Casolo and Dolfi's results, we investigate some arithmetical properties of the set of vanishing conjugacy class sizes.

This context is neatly portrayed by the \emph{prime graph} of $G$ for class sizes. Recall that, given a finite nonempty set of positive integers \(X\), the prime graph on $X$ has vertex set defined as the set of all prime numbers that are divisors of some element in \(X\), and edge set consisting of pairs $\{p,q\}$ such that $pq$ divides some element of $X$. 
When \(X=\{x^G\;:\;x\in G\}\) is the set of conjugacy class sizes of a finite group \(G\), we denote by  \(\Gamma(G)\) the prime graph on \(X\), and by \(\V G\) the set of vertices of \(\Gamma(G)\). Also, we write \(\gammava G\) and \(\Vv G\) for the corresponding objects in the case when \(X\) is the set of vanishing conjugacy class sizes of \(G\). 

In \cite{dolfi} the authors investigate when, for a finite group \(G\), a prime $p$ is not an element of $\Vv  G$; they prove that such a group \(G\) is \(p\)-nilpotent, with abelian Sylow $p$-subgroups. Note that \(\Vv  G\) can be strictly smaller than \(\V G\), as shown for instance by the symmetric group on three objects. This can actually occur also for nonsolvable groups (see \cite[Example 4.1]{dolfi}). However, our first result gives a condition to ensure this does not happen.

\begin{proD} 
\label{samevertices}
Let $G$ be a finite group, and suppose $G$ has a nonabelian minimal normal subgroup. Then $\V G = \Vv  G$.
\end{proD}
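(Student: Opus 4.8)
The plan is to prove the two inclusions separately. Since every vanishing conjugacy class size is in particular a conjugacy class size, the set of vanishing class sizes is contained in the set of all class sizes, whence $\Vv G \sbs \V G$ for every finite group. It therefore remains to establish $\V G \sbs \Vv G$, and I would argue by contradiction: suppose some prime $p$ lies in $\V G$ but not in $\Vv G$. Because $p \notin \Vv G$, the result of \cite{dolfi} quoted above applies and guarantees that $G$ is $p$-nilpotent with abelian Sylow $p$-subgroups; fix $P \in \syl p G$ and write $K = \oh{p'} G$, so that $G = K \rtimes P$ with $P$ abelian.

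First I would pin down the minimal normal subgroup relative to $p$. Let $N$ be the given nonabelian minimal normal subgroup, so that $N = S_1 \times \cdots \times S_t$ is a direct product of pairwise isomorphic nonabelian simple groups and $\zent N = 1$. Since subgroups of $p$-nilpotent groups are again $p$-nilpotent, $N$ is $p$-nilpotent; but if $p \mid |N|$ then $p$ divides the order of each factor $S_i$, and a nonabelian simple group of order divisible by $p$ has no nontrivial normal $p$-complement, so $N$ would not be $p$-nilpotent. Hence $p \nmid |N|$, and consequently $N \sbs K$ and $P \cap N = 1$, so $P$ acts coprimely on $N$ by conjugation. I also record that $\fit G \sbs \cent G N$: indeed $\fit G \cap N \sbs \zent N = 1$, and two normal subgroups meeting trivially centralise one another. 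Writing $C = \cent G N$, the quotient $G/C$ then embeds in $\aut N$ with the image of $N$ as socle.

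The crux is to produce a \emph{vanishing} element of $G$ whose conjugacy class size is divisible by $p$, contradicting $p \notin \Vv G$. Since $p \in \V G$, there is an element that $P$ does not centralise, and the idea is to convert this datum into a vanishing element still moved by $P$, exploiting the abundant supply of character zeros attached to $N$. Concretely, I would begin with a nonprincipal $\theta \in \irr N$ vanishing on a suitable class of $N$ (such zeros exist because $N$ is a nonabelian semisimple group) and pass, via Clifford theory, to a character $\chi \in \irr G$ lying over $\theta$; controlling where $\chi$ vanishes (on elements not conjugate into an inertia subgroup when $\theta$ is not $G$-invariant, and through Gallagher's theorem when it is) yields vanishing elements of $G$ whose $N$-component may be prescribed. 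I would then split according to whether $P$ centralises $N$: if $P \not\sbs C$, the coprime action gives $\cent N P < N$, and I would select the vanishing element inside $N$ among those moved by $P$, forcing $p \mid |x^G|$; if $P \sbs C$, the prime $p$ must enter $\V G$ through an element outside $C$, which I would combine with the zeros coming from $N$ to manufacture a vanishing element that $P$ still moves. The main obstacle I anticipate is exactly this simultaneous control: guaranteeing the vanishing condition \emph{and} the divisibility $p \mid |x^G|$ for one and the same $x$. This should reduce to a statement about zeros of irreducible characters in the extensions of the simple factors $S_i$ by the $p$-power automorphisms induced by $P$ (field, graph, and factor-permuting automorphisms alike), where the classification-based results on vanishing elements of almost simple groups are the decisive input.
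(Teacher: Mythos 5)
Your setup is sound: the inclusion $\Vv G\sbs\V G$ is indeed immediate, the reduction via \cite[Theorem A]{dolfi} to a $p$-nilpotent group with abelian Sylow $p$-subgroup $P$ is a legitimate starting point, and your observation that $p\nmid|N|$ is correct (it even eliminates one of the cases the paper has to treat). The problem is that everything after that is a statement of intent rather than a proof: the entire mathematical content of the proposition lies precisely in the ``simultaneous control'' you flag as an anticipated obstacle, namely producing a single element $x$ that is both vanishing in $G$ and has $p\mid|x^G|$. In the paper this is exactly Lemma~\ref{lemma2}, whose proof is a genuine case analysis: the Casolo--Dolfi permutation lemma (Proposition~\ref{three}) applied to the action of $G$ on the simple factors of $N$ to build an element $\prod u_\alpha\prod v_\beta$ whose centralizer is trapped in $G_{\Gamma_1}\cap G_{\Gamma_2}$; the observation that a simple group admitting a coprime automorphism of order $p$ is of Lie type, combined with \cite[Lemma 6(a)]{casolo} and defect-zero characters; and the Granville--Ono theorem plus the extension lemma of \cite{bianchi} when defect-zero characters are unavailable. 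None of this is reproduced or replaced in your sketch; ``the classification-based results on vanishing elements of almost simple groups are the decisive input'' is a correct diagnosis, not an argument.

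There is also a concrete flaw in the one step of your plan that is specific enough to check. In the case $P\not\sbs C=\cent GN$ you propose to ``select the vanishing element inside $N$ among those moved by $P$, forcing $p\mid|x^G|$''. Being moved by the fixed Sylow subgroup $P$ does not force $p\mid|x^G|$: that divisibility is equivalent to $x$ being centralized by \emph{no} conjugate of $P$, i.e.\ to $x\notin\bigcup_{g\in G}(\cent NP)^g$, and a union of $G$-conjugates of a proper subgroup of a normal subgroup can well cover that subgroup entirely (already $\bigcup_{g\in A_4}\langle(12)(34)\rangle^g=V_4$). So even granting an ample supply of vanishing elements of $N$, the existence of one avoiding every $\cent N{P^g}$ requires the structural input of Lemma~\ref{lemma2}. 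Finally, in the case $P\sbs C$ your plan is again only gestured at; the paper's route there is short --- either $C$ has a noncentral Sylow $p$-subgroup, in which case one takes $g\in C$ with $p\mid|g^C|$ and multiplies by a vanishing $h\in N$ via Lemma~\ref{products}, or else $P$ is abelian and normal in $G$, whence $p\notin\Vv G$ would force a normal $p$-complement and $P$ central, contradicting $p\in\V G$ --- and this should be written out rather than deferred.
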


Thus, for the two vertex
sets to be the same in a nonsolvable group, it seems important ``where" the nonsolvability of the group lies.

Still in the spirit of the work by Casolo and Dolfi, we carry out an investigation concerning the edges of the vanishing graph. In particular, 
is the ``vanishing version" of \cite[Theorem B(i)]{casolo} true? That is, if the edge $\{p,q\}$ is missing in the vanishing graph, 
but both $p$ and $q$ are vertices, is the group
$\{p,q \}$-solvable? 

As our main result shows, the answer is affirmative under the same assumptions as in the above Proposition. 

\begin{thmA} Let \(G\) be a finite group, and suppose \(G\) has a nonabelian minimal normal subgroup. If \(p\) and \(q\) are in \(\V G\), but there is no vanishing conjugacy class of \(G\) whose size is divisible by \(pq\), then \(G\) is \(\{p,q\}\)-solvable.
\end{thmA}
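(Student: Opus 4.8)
The plan is to reduce the statement to the class-size theorem of Casolo and Dolfi and then, under the contrary assumption, to manufacture a vanishing conjugacy class of size divisible by $pq$. Since $G$ has a nonabelian minimal normal subgroup, the Proposition gives $\V G=\Vv G$; in particular $p$ and $q$ each divide some conjugacy class size of $G$. By \cite[Theorem B(i)]{casolo} it therefore suffices to show that $pq$ divides no conjugacy class size at all: granting this, $G$ is $\{p,q\}$-solvable. I argue by contradiction, assuming there is $x\in G$ with $pq\mid |x^G|$. By the hypothesis of the theorem no vanishing class has size divisible by $pq$, so $x$ must be a non-vanishing element. Thus the whole problem collapses to the assertion that, when $p,q\in\V G$, a group with a nonabelian minimal normal subgroup admits no non-vanishing element of class size divisible by $pq$; equivalently, whenever $pq$ divides a class size it must already divide a \emph{vanishing} class size.

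The engine for producing zeros is the following Clifford-theoretic remark. Write $N=S_1\times\cdots\times S_k$ for the given minimal normal subgroup, each $S_i\cong S$ a nonabelian simple group. If $\theta\in\irr N$ is $G$-invariant and $\theta(y)=0$ for some $y\in N$, then $y$ is a vanishing element of $G$: any $\chi\in\irr G$ lying over $\theta$ satisfies $\chi_N=e\theta$, so $\chi(y)=e\,\theta(y)=0$. Combining this with the elementary divisibility of $|y^N|$ into $|y^G|$ (which holds because $\cent N y=N\cap\cent G y$), the task for elements inside $N$ becomes a matter of character theory and arithmetic within the simple group $S$: one seeks a $G$-invariant $\theta=\psi\times\cdots\times\psi$, with $\psi\in\irr S$ invariant under the automorphisms of $S_1$ induced by its stabiliser in $G$, together with a point $y=(s_1,\dots,s_k)$ at which some coordinate annihilates $\psi$ while $\prod_i|s_i^{S}|$ is divisible by $pq$.

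To organise this I would pass to a minimal counterexample and first trim its structure: standard arguments let one take $N$ to be the unique minimal normal subgroup and force $\cent G N=1$ (otherwise a proper quotient inherits the hypotheses), and since $\zent N=1$ this places $N\nor G\le\aut N$, so $G$ is almost simple (when $k=1$) or a subgroup of $\aut S\wr\sym k$ with socle $N$. Here the primes dividing $|G|$ are controlled by $N$ and its automorphisms. When both $p$ and $q$ divide $|N|$ the construction above applies directly, the factor $pq$ being sourced from one or two coordinates of $N$ and the zero from an automorphism-stable character of $S$ (for groups of Lie type the Steinberg character, invariant under all automorphisms and vanishing exactly on the non-semisimple elements, is the natural candidate). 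When a prime, say $q$, divides $|\out S|$ but not $|N|$ — as happens through field automorphisms — its contribution to a class size can only come from the outer action, so instead I would seek a vanishing element outside $N$ whose centraliser has index divisible by $q$.

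The main obstacle is precisely this uniform simple-group input: producing, across the whole CFSG list, an automorphism-stable irreducible character with a zero at an element whose $G$-class size carries the full factor $pq$, and in particular accommodating primes that enter only via field automorphisms of $S$ while ensuring the relevant character of $N$ is genuinely $G$-invariant rather than merely permuted within its $G$-orbit. I expect essentially all of the case analysis to live here; by contrast the reduction to \cite[Theorem B(i)]{casolo} and the Clifford mechanism are formal.
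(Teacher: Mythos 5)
Your architecture has two fatal gaps. First, the reduction ``standard arguments let one take $N$ to be the unique minimal normal subgroup and force $\cent GN=1$'' is not available, and if it were the theorem would follow instantly from Theorem~B: a nonabelian minimal normal subgroup with trivial centralizer forces $\fit G=1$, whence $\gammava G$ is complete and the hypothesis is already contradicted. To pass to $G/\cent GN$ you would need $p,q\in\V{G/\cent GN}$ (which can fail, e.g.\ when $p$ divides class sizes of $G$ only through $\cent GN$), and, more seriously, you would need to recover $\{p,q\}$-solvability of $G$ from that of the quotient, which requires controlling $\cent GN$ itself; $G$ does embed in $G/\cent GN\times G/N$, but $G/N$ need not have a nonabelian minimal normal subgroup, so no induction applies to that factor. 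The entire content of Theorem~A lives in the case your reduction discards: by Theorem~B a minimal counterexample has $\fit G\neq 1$, hence an abelian minimal normal subgroup $A$ coexisting with the nonabelian one, and the paper's proof is about exactly that interaction --- showing $\fit G=A$ is a $q$-group which is a full Sylow $q$-subgroup, that $G/\cent GA$ is $p$-nilpotent by \cite[Theorem A]{dolfi}, and then invoking Corollary~\ref{coro} (the regular-orbit and deleted-permutation-module analysis of Lemmas~\ref{deleted} and~\ref{regular}) to show that every element of $K\setminus\cent GA$ is vanishing, where $K$ carries a class of size divisible by $pq$.

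Second, your reduction via \cite[Theorem B(i)]{casolo} asks for a strictly stronger statement than the theorem: that in a group with a nonabelian minimal normal subgroup, whenever $pq$ divides some class size it divides a \emph{vanishing} class size, i.e.\ that $\Gamma(G)$ and $\gammava G$ have the same edges and not merely the same vertices. Neither your sketch nor the paper proves this, and the theorem does not need it (it tolerates $\{p,q\}$ being an edge of $\Gamma(G)$ but not of $\gammava G$ provided $G$ is $\{p,q\}$-solvable, and the paper's argument applies \cite[Theorem B(i)]{casolo} only to the subgroups $\cent GM$ and $K$, not to $G$). Your Clifford ``engine'' --- a $G$-invariant $\theta\in\irr N$ with $\theta(y)=0$ forces $y$ to be a vanishing element of $G$ --- is correct and is indeed used in the paper through \cite[Lemma 5]{bianchi}, but it only manufactures vanishing elements inside the nonabelian minimal normal subgroup, whereas the elements that must be shown to vanish in the final contradiction lie outside it, above the abelian radical; that is precisely where the new machinery of Section~4 is required and where your proposal offers no substitute.
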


As shown by \cite[Example 4.1]{dolfi}, Theorem~A fails in general if \(G\) does not have a nonabelian minimal normal subgroup.
An important step in the proof of the above result is the following Theorem B, that is the vanishing version of \cite[Theorem 9]{casolo}. 

\begin{thmB} Let $G$ be a finite group with trivial Fitting subgroup. Then every prime divisor of \(|G|\) is in \(\Vv G\), and \(\gammava G\) is a complete graph.
 \end{thmB}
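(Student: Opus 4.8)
The plan is to mirror the proof of \cite[Theorem 9]{casolo}, upgrading each conjugacy class produced there to a \emph{vanishing} one. Write $N=\mathrm{Soc}(G)$. Since $\fit G=1$, the group $N$ is a direct product $N=S_1\times\cdots\times S_n$ of nonabelian simple groups and $\cent G N=1$, so that $G$ embeds in $\aut N$ and $G/N$ embeds in $\out N$. For the first assertion, observe that $\zent G\le\fit G=1$, so $\zent G=1$; hence no Sylow subgroup of $G$ is central, and by the standard fact that a prime divides some class size exactly when the corresponding Sylow subgroup is noncentral, every prime divisor of $|G|$ lies in $\V G$. As a minimal normal subgroup of $G$ is a product of some of the $S_i$ and is therefore nonabelian, the Proposition above yields $\V G=\Vv G$, and so every prime divisor of $|G|$ lies in $\Vv G$.

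The key tool for the second assertion is the theorem on nonvanishing elements of Dolfi--Navarro--Pacifici--Sanus, stating that a nonvanishing element of odd order of a finite group lies in its Fitting subgroup. Since $\fit G=1$, this forces \emph{every nontrivial element of $G$ of odd order to be a vanishing element}. Consequently, to prove that $\gammava G$ is complete it suffices to establish the following purely arithmetical statement: for every pair of distinct primes $p,q$ dividing $|G|$ there is an element $x\in G$ of odd order with $pq\mid|G:\cent G x|$. Indeed, any such $x$ is nontrivial and of odd order, hence vanishing, so $x^G$ is a vanishing class whose size is divisible by $pq$.

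To produce such an $x$ I would exploit the socle decomposition. For $y=y_i\in S_i$ one has $\cent N y=\cent{S_i}{y_i}\times\prod_{k\ne i}S_k$, so $|S_i:\cent{S_i}{y_i}|=|N:\cent N y|$ divides $|G:\cent G y|$; more generally, for a product $y=y_iy_j$ of odd-order elements in distinct factors, $|S_i:\cent{S_i}{y_i}|\cdot|S_j:\cent{S_j}{y_j}|$ divides $|G:\cent G y|$. Thus, when both $p$ and $q$ divide $|S_i|$ for a single factor, it suffices to find an odd-order $y_i\in S_i$ whose $S_i$-class size is divisible by $pq$; and when $p\mid|S_i|$, $q\mid|S_j|$ with $i\ne j$, it suffices to find odd-order elements of $S_i$ and $S_j$ with class sizes divisible by $p$ and by $q$ respectively, and to multiply them. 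These two reductions cover the case where both $p$ and $q$ divide $|N|$, and they require only the ``odd-order witness'' versions of the vertex and completeness statements for nonabelian simple groups, which should be extractable from the CFSG-based analysis of simple-group class sizes underlying \cite[Theorem 9]{casolo}.

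The remaining, and hardest, case is when $p$ or $q$ divides $|G/N|$ but not $|N|$; such primes genuinely occur (for example field-automorphism primes), so they cannot be avoided. Here a witnessing element must fail to be centralized by a full Sylow subgroup acting through outer automorphisms: one chooses a Sylow $p$-subgroup $P$ and an odd-order element $x\in N$ not fixed by $P$ (so that $p\mid|G:\cent G x|$) whose $N$-class size is moreover divisible by $q$; when $q$ is itself an outer prime one instead seeks a single odd-order element moved by Sylow subgroups for both primes, or an odd-order outer element whose class size absorbs $pq$. Guaranteeing such elements — that is, controlling the fixed points of automorphisms of odd prime order on the odd-order classes of the factors $S_i$ while simultaneously enforcing the required divisibility — is the main obstacle, and I expect this to be the step that genuinely relies on the classification and on the detailed subgroup and automorphism structure of the finite simple groups, exactly as in the original argument of Casolo and Dolfi.
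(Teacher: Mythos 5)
Your first assertion is fine and is exactly the paper's route: $\zent G\le\fit G=1$ gives $\V G=\pi(G)$, and the Proposition (which is proved independently of Theorem~B) gives $\V G=\Vv G$. The completeness argument, however, rests on a misstated theorem. The Dolfi--Navarro--Pacifici--Sanus result says that a nonvanishing element whose order is \emph{coprime to $6$} lies in $\fit G$; it is false for odd order. Concretely, in $\alt 7$ (which has trivial Fitting subgroup) the $3$-cycles are nonvanishing: every irreducible character of $\alt 7$ is nonzero on that class. So your claim that every nontrivial odd-order element of $G$ is vanishing is wrong, and the reduction ``find an odd-order $x$ with $pq\mid|x^G|$'' does not suffice. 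If you repair the tool to ``order coprime to $6$,'' the arithmetic demand becomes much stronger (a witness of order coprime to $6$ with class size divisible by $pq$), and it is not what the underlying simple-group analysis delivers: the paper's own witnesses are sometimes $\{2,3\}$-elements, and their vanishing is established not through order considerations but by exhibiting an irreducible character of the simple factor that vanishes on them and extends to $G$ (Lemma~2.2 of \cite{dolfi} together with Lemma~5 of \cite{bianchi}), or through $q$-defect zero characters via Theorem~\ref{defect} and Lemma~\ref{defect2}.

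Independently of this, the case you yourself identify as hardest --- $p$ or $q$ dividing $|G/N|$ but not $|N|$ --- is not proved but only described as an intention, and it is precisely where the real work lies. The paper handles it without any fixed-point analysis of automorphisms: by Proposition~\ref{three} one finds disjoint nonempty subsets $\Gamma_1,\Gamma_2$ of $\{S_1,\dots,S_n\}$ with $p$ (or $pq$) dividing $|G:G_{\Gamma_1}\cap G_{\Gamma_2}|$, and then builds $g=\Pi u_\alpha\Pi v_\beta$ from elements $u,v$ of \emph{different orders} placed on $\Gamma_1$ and $\Gamma_2$; the order discrepancy traps $\cent Gg$ inside $G_{\Gamma_1}\cap G_{\Gamma_2}$, while choosing $o(v)$ divisible by a prime $\ge 5$ (or arranging the divisibility by $q$ through the almost simple groups $N/\cent N{S_i}$, via Proposition~\ref{almost} and \cite[Lemma 6(a)]{casolo}) makes $g$ vanishing and its class size divisible by $pq$. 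Your sketch would need an argument of comparable precision before it could be accepted; as written there is a genuine gap both in the key lemma and in this case analysis.
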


We point out that another key ingredient in the proof of Theorem A is Corollary~\ref{coro}, that turns out to be a useful tool in locating vanishing elements, and may be of interest in its own right.

As an immediate consequence of Theorem~A, we get the following \(p\)-solvability criterion. Recall that a vertex of a graph is called \emph{complete} if it is adjacent to all the other vertices.

\begin{corC}
Let \(G\) be a finite group and \(p\) a prime. Suppose \(G\) has a nonabelian mi\-nimal normal subgroup. If \(p\) is not a complete vertex of \(\gammava G\), then \(G\) is \mbox{\(p\)-solvable.}
\end{corC}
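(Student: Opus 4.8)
The plan is to obtain this corollary as a direct consequence of Theorem~A, supplemented by the result of \cite{dolfi} quoted in the introduction. The first observation I would record is that, since $G$ is assumed to have a nonabelian minimal normal subgroup, the Proposition above yields $\V G = \Vv G$; hence a prime is a vertex of $\gammava G$ if and only if it divides some conjugacy class size of $G$. I would then argue according to whether or not $p$ lies in $\Vv G$.

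If $p \notin \Vv G$, then $p$ is trivially not a complete vertex, and the cited theorem of \cite{dolfi} already gives that $G$ is $p$-nilpotent with abelian Sylow $p$-subgroups, so in particular $p$-solvable. If instead $p \in \Vv G$ but $p$ is not a complete vertex, then by the very definition of completeness there exists a vertex $q \neq p$ of $\gammava G$ that is not adjacent to $p$ (such a $q$ must exist, for if $p$ were the only vertex it would be vacuously complete). Now non-adjacency of $p$ and $q$ says precisely that no vanishing conjugacy class of $G$ has size divisible by $pq$, while $p, q \in \Vv G = \V G$; thus the hypotheses of Theorem~A hold for the pair $p, q$, and I would invoke it to conclude that $G$ is $\{p,q\}$-solvable, and therefore $p$-solvable.

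In this sense the corollary is simply a reformulation of Theorem~A, and I do not anticipate any genuine obstacle: all the difficulty is already contained in Theorem~A (and in the Proposition ensuring $\V G = \Vv G$). The only points needing a little attention are the formal handling of the definition of a complete vertex---so that the failure of completeness really does supply a non-adjacent vertex $q$---and the separate, easy treatment of the case $p \notin \Vv G$ via the $p$-nilpotency result of \cite{dolfi}.
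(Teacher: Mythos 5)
Your argument is correct and follows essentially the same route as the paper: the case $p\notin\Vv G=\V G$ is dispatched by the $p$-nilpotency result of \cite{dolfi} (or directly, since $G$ then has a central Sylow $p$-subgroup), and the case where $p$ is a vertex lacking a neighbour $q$ is exactly the situation of Theorem~A. No gaps.
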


Throughout this paper, every group is assumed to be a finite group.
  
\section{Preliminaries}

In this section we gather together previously known results that will be of use. We denote the set of all vanishing elements of the group $G$ by ${\rm Van}(G)$, whereas, as customary, \(\pi(G)\) denotes the set of prime divisors of \(|G|\).

\begin{lemma}\label{lemma1} Let $N$ be a normal subgroup of $G$. Then $\gammava  {G/N}\) is a subgraph of \(\gammava  G\).
\end{lemma}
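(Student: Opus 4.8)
The plan is to verify directly the two defining conditions for a subgraph: every vertex of \(\gammava{G/N}\) is a vertex of \(\gammava G\), and every edge of \(\gammava{G/N}\) is an edge of \(\gammava G\). Both will drop out at once from two elementary observations about how vanishing elements and class sizes behave under the canonical projection \(G\to G/N\).

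First I would record a lifting principle for vanishing elements. Suppose \(gN\in\gv{G/N}\), so that \(\bar\chi(gN)=0\) for some \(\bar\chi\in\irr{G/N}\). Inflating \(\bar\chi\) along the projection yields a character \(\chi\in\irr G\) with \(\chi(g)=\bar\chi(gN)=0\); hence \(g\in\gv G\). Thus every vanishing element of \(G/N\) is the image of a vanishing element of \(G\).

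Second, I would invoke the standard divisibility of class sizes under quotients. Since the image of \(\cent G g\) in \(G/N\) is contained in \(\cent{G/N}{gN}\), passing to indices shows that \(|(gN)^{G/N}|\) divides \([G:\cent G g N]\), which in turn divides \([G:\cent G g]=|g^G|\). In particular \(|(gN)^{G/N}|\) divides \(|g^G|\).

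Combining the two observations finishes the argument: if a prime \(p\) (respectively, a product \(pq\) of two distinct primes) divides the vanishing class size \(|(gN)^{G/N}|\) in \(G/N\), then \(g\) is a vanishing element of \(G\) and, by the divisibility just noted, \(p\) (respectively, \(pq\)) divides the vanishing class size \(|g^G|\) of \(G\). This shows simultaneously that each vertex and each edge of \(\gammava{G/N}\) occurs in \(\gammava G\), which is precisely the asserted subgraph relation. I do not anticipate any genuine obstacle here; the only point requiring a little care is the inflation step, where one must confirm that the lifted character remains irreducible and still vanishes on the chosen element, both of which are immediate from the construction.
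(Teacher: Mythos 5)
Your proof is correct and follows essentially the same route as the paper: the divisibility $|(gN)^{G/N}|\mid |g^G|$ together with the fact that vanishing elements of $G/N$ lift to vanishing elements of $G$. The only difference is that you prove the lifting step directly by inflating the character, whereas the paper cites it from the literature; your inflation argument is complete and correct.
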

\begin{proof} We combine two results. Firstly note that $|(xN)^{G/N}|$ divides $|x^G|$. Secondly, by \cite[Lemma 2.1]{spiga}, 
if $xN \in {\rm Van}(G/N)$ then $xN \subseteq {\rm Van}(G)$.
\end{proof}

\begin{lemma}\cite[Proposition 2.5]{spiga}\label{lemma3} Let $M$ and $N$ be normal subgroups of a group $G$ with $M \cap N =1$. Suppose
 there exists $m \in M \cap {\rm Van}(G)$. Then $mn \in {\rm Van}(G)$ for all $n \in N$.
\end{lemma}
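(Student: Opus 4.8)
The plan is to produce a \emph{single} irreducible character of $G$ that is trivial on $N$ and vanishes at $m$; such a character then vanishes on the whole coset $mN$, settling the claim uniformly in $n$.

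First I would record the structural fact that $[M,N]\sbs M\cap N=1$, so $M$ centralises $N$; in particular $m$ and $n$ commute and $L:=MN$ is an internal direct product $M\times N$ which is normal in $G$. Thus $\irr L$ consists of the characters $\alpha\times\beta$ with $\alpha\in\irr M$ and $\beta\in\irr N$, where $(\alpha\times\beta)(xy)=\alpha(x)\beta(y)$ for $x\in M$, $y\in N$. Now choose $\chi\in\irr G$ with $\chi(m)=0$. By Clifford's theorem $\chi_L=e\sum_{(\alpha,\beta)\in\mathcal O}\alpha\times\beta$, where $\mathcal O\sbs\irr M\times\irr N$ is a single $G$-orbit and $e$ is the common multiplicity. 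Evaluating at $m$ (that is, at $m\cdot 1$ in $M\times N$) gives $\chi(m)=e\sum_{(\alpha,\beta)\in\mathcal O}\alpha(m)\beta(1)$.

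The key observation is that, grouping the terms by the first coordinate, the inner sum $\sum_{\beta:(\alpha,\beta)\in\mathcal O}\beta(1)$ is independent of $\alpha$: since $G$ acts transitively on $\mathcal O$ and the projection $\mathcal O\to\irr M$, $(\alpha,\beta)\mapsto\alpha$, is $G$-equivariant, conjugation matches the fibres over $\alpha$ and over $\alpha^g$ while preserving character degrees. Writing $\mathcal A\sbs\irr M$ for the image $G$-orbit and $d>0$ for this common fibre-sum, I obtain $0=\chi(m)=ed\sum_{\alpha\in\mathcal A}\alpha(m)$, whence $\sum_{\alpha\in\mathcal A}\alpha(m)=0$. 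It remains to transport this vanishing to a character of $G/N$: fix $\alpha\in\mathcal A$ and set $\theta:=\alpha\times 1_N\in\irr L$. Since $1_N$ is $G$-fixed, the $G$-orbit of $\theta$ is exactly $\mathcal A\times 1_N$. Let $\psi\in\irr G$ lie over $\theta$. Every $L$-constituent of $\psi$ is a conjugate $\theta^g=\alpha^g\times 1_N$, trivial on $N^g=N$, so $N\sbs\ker\psi$, and $\psi_L=e'\sum_{\alpha'\in\mathcal A}\alpha'\times 1_N$ for some $e'>0$; hence $\psi(m)=e'\sum_{\alpha'\in\mathcal A}\alpha'(m)=0$. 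As $n\in N\sbs\ker\psi$, this yields $\psi(mn)=\psi(m)=0$, so $mn\in\gv G$ for every $n\in N$.

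The main obstacle is precisely the maneuver that makes the argument work: passing from ``$\chi$ vanishes at $m$'', where $\chi$ may be highly nontrivial on $N$, to a vanishing character that is \emph{trivial} on $N$. Simply reusing $\chi$ fails, because $\chi(mn)=e\sum_{\mathcal O}\alpha(m)\beta(n)$ need not vanish; the resolution is to distil from $\chi_L$ the $G$-orbit $\mathcal A$ in $\irr M$ with $\sum_{\alpha\in\mathcal A}\alpha(m)=0$ and then re-realise that orbit through $\alpha\times 1_N$, which lives over $G/N$. The two points requiring care are the constancy of the fibre-sum $d$ on $\mathcal A$ (equivariance together with degree-invariance under conjugation) and the assertion that a character $\psi$ lying over $\alpha\times 1_N$ has $N$ in its kernel; both are routine once the orbit structure above is in place.
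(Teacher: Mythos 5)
Your proof is correct: the decomposition $\irr{MN}=\{\alpha\times\beta : \alpha\in\irr M,\ \beta\in\irr N\}$, the Clifford decomposition of $\chi_{MN}$, the $G$-equivariance of the projection onto the first coordinate (hence the constancy of the fibre-sum $d$), the identification of the $G$-orbit of $\alpha\times 1_N$ with $\mathcal A\times\{1_N\}$, and the conclusions $N\sbs\ker\psi$ and $\psi(mn)=\psi(m)=0$ all check out. Note that the paper itself gives no proof of this lemma, quoting it from \cite[Proposition~2.5]{spiga}; your argument realizes the same underlying strategy, namely manufacturing a single irreducible character of $G$ that is trivial on $N$ and vanishes at $m$. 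One simplification is worth pointing out: your ``key observation'' about the constancy of the fibre-sums is Clifford's theorem for $M$ itself in disguise. Since $m\in M$ and $M\nor G$, you may restrict $\chi$ directly to $M$: Clifford gives $\chi_M=e''\sum_{\alpha\in\mathcal A}\alpha$ with $\mathcal A$ a single $G$-orbit in $\irr M$ and uniform multiplicity $e''$ (in fact $e''=ed$ in your notation), so $0=\chi(m)=e''\sum_{\alpha\in\mathcal A}\alpha(m)$ in one line, with no need to pass through $\irr{MN}$ or compare fibres. The product structure of $MN$ is genuinely needed only in the second half of your argument, to form $\theta=\alpha\times 1_N$ and deduce $N\sbs\ker\psi$; equivalently one can transport $\mathcal A$ through the $G$-equivariant isomorphism $M\simeq MN/N$, choose $\overline\psi\in\irr{G/N}$ over the transported character, and inflate --- the inflation is exactly your $\psi$. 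So your route is sound but carries one redundant layer; both versions end with the same character $\psi$ and the same conclusion $mN\sbs\gv G$.
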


\begin{proposition}\label{three} \cite[Lemma 8]{casolo} Let $G$ be a permutation group on a finite set $\Omega$ and $p,q$ distinct primes. Then there
 exist two nonempty subsets $\Gamma_1, \Gamma_2$ of $\Omega$ with $\Gamma_1 \cap \Gamma_2 = \emptyset$ and $\{p,q\} \cap \pi(G) \subseteq 
 \pi(|G:G_{\Gamma_1} \cap G_{\Gamma_2}|)$.
\end{proposition}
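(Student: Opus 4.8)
The plan is to prove the (formally stronger, but more flexible for induction) statement in which an arbitrary finite set $\pi$ of primes replaces $\{p,q\}$: for every $K\le\sym\Omega$ there are disjoint nonempty $\Gamma_1,\Gamma_2\subseteq\Omega$ with $\pi\cap\pi(K)\subseteq\pi(|K:K_{\Gamma_1}\cap K_{\Gamma_2}|)$, where $K_\Gamma$ denotes the setwise stabiliser. I would argue by induction on $|\Omega|$, and the cleanest first step is a reduction to transitive groups. If $\Omega'$ is a $K$-invariant subset and $A,B\subseteq\Omega'$, then the restriction map $K\to K^{\Omega'}$ is onto and pulls the joint stabiliser of $(A,B)$ in $K^{\Omega'}$ back to exactly $K_A\cap K_B$; hence $|K:K_A\cap K_B|=|K^{\Omega'}:(K^{\Omega'})_A\cap(K^{\Omega'})_B|$, so it suffices to capture each prime inside a single transitive constituent. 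Moreover, if two primes are captured on \emph{different} orbits $\Omega_i\ne\Omega_j$ by pairs $(A_i,B_i)$ and $(A_j,B_j)$, then $\Gamma_1=A_i\cup A_j$, $\Gamma_2=B_i\cup B_j$ work at once: any element of $K_{\Gamma_1}\cap K_{\Gamma_2}$ preserves each orbit and hence fixes each piece setwise, so $K_{\Gamma_1}\cap K_{\Gamma_2}\le K_{A_i}\cap K_{B_i}$ and the index is a multiple of both captured quantities. This disposes of the intransitive case and leaves a single transitive $K$.

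So assume $K$ is transitive of degree $n=|\Omega|$, and write $\pi\cap\pi(K)=\pi_1\cup\pi_2$ with $\pi_1$ the primes dividing $n$ and $\pi_2$ those not dividing $n$. The primes in $\pi_1$ come for free: if one of the two subsets is a single point $\{\delta\}$, then $|K:K_\delta|=n$ divides the index of any smaller stabiliser, so every $r\in\pi_1$ already divides the final index. For $\pi_2$ I would descend: if $r\nmid n$ then $r\mid|K_\delta|$, and $K_\delta$ acts faithfully on $\Omega\setminus\{\delta\}$, a strictly smaller set, with $\pi_2\subseteq\pi(K_\delta)$. The inductive hypothesis applied to $K_\delta$ produces disjoint nonempty $A_0,B_0\subseteq\Omega\setminus\{\delta\}$ with $\pi_2\subseteq\pi\bigl(|K_\delta:(K_\delta)_{A_0}\cap(K_\delta)_{B_0}|\bigr)$. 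Running through the stabiliser chain then gives the clean identity
\[
|K:K_{A_0}\cap K_{B_0}\cap K_\delta|=n\cdot|K_\delta:(K_\delta)_{A_0}\cap(K_\delta)_{B_0}|,
\]
whose right-hand side is divisible by every prime of $\pi_1$ (through the factor $n$) and of $\pi_2$ (through the second factor). In other words the \emph{three} subsets $A_0,B_0,\{\delta\}$ jointly capture all of $\pi\cap\pi(K)$.

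The main obstacle is exactly that the conclusion permits only \emph{two} disjoint subsets, i.e.\ a partition of $\Omega$ into three parts, whereas the recursion naturally yields the four parts $A_0,B_0,\{\delta\}$ and the remainder; and one cannot simply discard $\{\delta\}$. Already $K=\alt5$ in its degree-$5$ action with $\pi=\{3\}$ shows this: as $\alt5$ is $2$-transitive, no two singletons have joint-stabiliser index divisible by $3$ (every such index is $20$), so dropping the auxiliary point can destroy divisibility by a prime of $\pi_2$. The heart of the proof is therefore to compress the four-part configuration back to three parts—by absorbing $\delta$ into $A_0$ or $B_0$, or into the remainder—while keeping the joint stabiliser free of Sylow $r$-subgroups for every $r\in\pi\cap\pi(K)$. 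For $\alt5$ one checks that absorbing $\delta$ into a block does work, the pair $(\{1,2\},\{3,4\})$ having index $30$. I expect the general compression to require choosing the descent point $\delta$ in a suitably small suborbit and exploiting the distinct sizes of the resulting blocks to prevent the stabiliser from acquiring new $r$-elements; this step, together with checking that the inductive hypothesis is invoked with the correct prime set at each stage, is where essentially all the work lies. The base cases ($\pi\cap\pi(K)=\emptyset$, or $\pi_2=\emptyset$ where two arbitrary distinct points already suffice) are immediate.
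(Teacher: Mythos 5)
Your overall strategy (induction on $|\Omega|$, reduction to the transitive case via orbits, splitting the primes according to whether they divide the degree $n$, and descending to a point stabiliser for the primes coprime to $n$) is sound as far as it goes, and the reductions you do carry out are correct. But the proposal stops exactly at the step that constitutes the actual content of the lemma: your recursion delivers \emph{three} pairwise disjoint sets $A_0$, $B_0$, $\{\delta\}$ whose joint stabiliser has index divisible by all the relevant primes, whereas the statement allows only two, and you do not prove that this triple can be compressed to a pair. Your own example shows this is not a formality: for $\alt 5$ in its natural action with the prime $3$, every proper nonempty subset has setwise stabiliser of index coprime to $3$ (the indices are $5$ and $10$), so neither discarding $\{\delta\}$ nor any naive merging is guaranteed to preserve divisibility, and the pair that does work, $(\{1,2\},\{3,4\})$, is found by inspection rather than produced by your recursion. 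The closing sentence (``I expect the general compression to require choosing the descent point in a suitably small suborbit\dots'') is a conjecture about how a proof might go, not an argument; nothing in the proposal rules out that after absorbing $\delta$ into $A_0$, into $B_0$, or into the remainder, the enlarged joint stabiliser acquires a full Sylow $r$-subgroup for some $r$ in the target set.

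For what it is worth, the paper does not prove this statement either: it is quoted from \cite[Lemma 8]{casolo}, so there is no in-paper argument to measure yours against. To complete your approach you would need either to strengthen the inductive hypothesis (for instance, prove a version that controls the stabiliser of a suitable partition and then show it implies the two-subset statement), or to replace the descent by a direct argument showing that for each $r\in\{p,q\}\cap\pi(G)$ and each Sylow $r$-subgroup $R$ of $G$, the sets $\Gamma_1,\Gamma_2$ can be chosen so that no conjugate of $R$ has both $\Gamma_1$ and $\Gamma_2$ as unions of its orbits. As it stands, the heart of the proof is missing.
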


(Here, for \(i\in\{1,2\}\), \(G_{\Gamma_i}\) denotes the setwise stabilizer of \(\Gamma_i\) in \(G\).) 

Recall, an irreducible character $\chi$ of $G$ is said to have $q$-defect zero for some prime $q$, if $q$ does not divide $|G|/ \chi(1)$.
If $\chi$ is such a character and $g$ is an element of $G$ with order divisible by $q$ then $\chi(g)=0$, i.e. $g$ is a vanishing element
\cite[Theorem 8.17]{isaacs}. Thus the following is useful.

\begin{theorem}\label{defect}\cite[Corollary 2]{ono} Let $G$ be a nonabelian simple group and $q$ a prime divisor of $|G|$. Then $G$ has
 an irreducible character of $q$-defect zero unless one of the following holds.
 \begin{enumeratei}
 \item The prime $q$ is $2$ and $S$ is isomorphic to either ${\rm M}_{12}$, ${\rm M}_{22}$, ${\rm M}_{24}$, ${\rm J}_2$, ${\rm HS},$ ${\rm Suz}$, ${\rm Ru}$, ${\rm Co}_1$,
 ${\rm Co}_3$, ${\rm BM}$ or $\alt n$ for various values of \(n\geq 7\)
 \item The prime $q$ is $3$ and $S$ is isomorphic to either ${\rm Suz}, {\rm Co}_3$ or $\alt n$ for various values of \(n\geq 7\).
 \end{enumeratei}
\end{theorem}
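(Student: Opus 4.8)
The plan is to first translate the statement into the language of block theory and then to proceed, as one must, via the Classification of Finite Simple Groups, treating the three broad families — alternating groups, groups of Lie type, and sporadic groups — separately. Recall that a character $\chi\in\irr G$ has $q$-defect zero precisely when $|G|_q$, the full $q$-part of $|G|$, divides $\chi(1)$; equivalently, $\chi$ constitutes a $q$-block of defect zero all on its own. Thus the theorem amounts to deciding, for each nonabelian simple $G$ and each $q\in\pi(G)$, whether such a character exists, and the goal is to show that the only failures are those listed.

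For the alternating groups the central tool is the modular representation theory of $\sym n$. By the Nakayama conjecture the irreducible characters of $\sym n$ are parametrized by the partitions $\lambda$ of $n$, and $\chi_\lambda$ lies in a $q$-block of defect zero exactly when $\lambda$ is a $q$-\emph{core}, i.e.\ has no hook of length divisible by $q$. Hence the existence of a $q$-defect zero character of $\sym n$ is equivalent to the existence of a $q$-core partition of $n$, which one then transfers to $\alt n$ by analysing how characters of $\sym n$ restrict. The crux is therefore the purely arithmetic question of when $n$ admits a $q$-core partition. For $q\geq 5$ one shows that every non-negative integer possesses one, by studying the generating function $\prod_{k\ge 1}(1-x^{qk})^{q}/(1-x^{k})$ and proving that its coefficients never vanish; consequently no alternating group is an exception for these primes. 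For $q\in\{2,3\}$ the count can vanish — $2$-cores exist only for triangular $n$, while the $3$-core count is controlled by an analogous divisor condition — which is exactly what forces $\alt n$ (for the appropriate $n\geq 7$) into both exception lists.

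For the groups of Lie type the defining characteristic $p$ is immediate: the Steinberg character has degree equal to $|G|_p$, so it is always a character of $p$-defect zero. For a prime $q$ different from the defining characteristic one works with the known generic degrees of the unipotent characters (and, where needed, characters in other Lusztig series) and verifies in each Lie type and rank that some character degree absorbs the full $q$-part of $|G|$; this is a finite but lengthy case analysis, after which it emerges that no group of Lie type is a genuine exception. Finally the sporadic groups are settled by direct inspection of their (known) character tables, and this is what produces the listed sporadic exceptions. Assembling the three analyses yields exactly the stated list.

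The main obstacle is undoubtedly the alternating-group case, and within it the positivity statement for $q$-core partitions when $q\ge 5$: proving that the relevant generating-function coefficients are always strictly positive requires genuine number theory (estimates on representation numbers of quadratic forms and modular-form techniques). The delicate point for $q\in\{2,3\}$ is then to pin down precisely which $n$ fail and to check that the passage from $\sym n$ to $\alt n$ neither creates nor destroys defect-zero characters beyond what the $q$-core count predicts.
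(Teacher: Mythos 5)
This statement is not proved in the paper at all: it is quoted verbatim from Granville--Ono \cite[Corollary 2]{ono}, so there is no internal proof to compare against. What you have written is, in outline, a faithful reconstruction of how the result is actually established in the cited literature: the Nakayama conjecture reduces the alternating case to the existence of $q$-core partitions; the positivity of the $t$-core counting function $c_t(n)$ for $t\geq 4$ (the main theorem of Granville--Ono, proved via representation numbers of quadratic forms and modular-form estimates applied to $\prod_{k\geq 1}(1-x^{tk})^t/(1-x^k)$) disposes of all primes $q\geq 5$; the Steinberg character handles the defining characteristic for Lie type, with the non-defining primes settled by earlier case analysis (due to Michler and Willems, whom you do not name but whose work is exactly the ``lengthy case analysis'' you describe); and the sporadic exceptions come from character-table inspection. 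So your roadmap is the right one. Be aware, though, that as written it is a roadmap and not a proof: the two genuinely hard ingredients --- the positivity theorem for $q\geq 5$ and the Lie-type analysis in non-defining characteristic --- are invoked as black boxes, which is reasonable for a result of this magnitude but should be flagged as such.

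One concrete inaccuracy: your closing claim that the passage from $\sym n$ to $\alt n$ ``neither creates nor destroys defect-zero characters beyond what the $q$-core count predicts'' fails for $q=2$. Since $|\alt n|_2=|\sym n|_2/2$, a character $\chi_\lambda$ of $\sym n$ of $2$-defect \emph{one} with $\lambda\neq\lambda'$ restricts irreducibly to a character of $\alt n$ of $2$-defect \emph{zero}; thus $\alt n$ acquires $2$-defect-zero characters coming from weight-one $2$-blocks of $\sym n$ (roughly, whenever $n-2$, and not only $n$, is triangular), and the precise determination of the exceptional $n$ in conclusion (a) depends on this. For odd $q$ the transfer is clean, since $|\alt n|_q=|\sym n|_q$ and splitting a self-associate character only halves the degree, which does not disturb the $q$-part. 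For $q=3$ your divisor-condition description of the $3$-core count (the number of $3$-cores of $n$ being $d_1(3n+1)-d_2(3n+1)$, counting divisors by residue mod $3$) is correct.
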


The above result will often be used in conjunction with the following.

\begin{lemma}\label{defect2}\cite[Lemma 2.7]{spiga} Let $G$ be a group, $N$ a normal subgroup of $G$ and $q$ a prime divisor of
 $|N|$. If $N$ has an irreducible character of $q$-defect zero, then every element of $N$ of order divisible by $q$ is a vanishing element
 of $G$.
\end{lemma}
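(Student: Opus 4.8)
The plan is to reduce the statement to the classical fact, recalled just before Theorem~\ref{defect} above, that an irreducible character of $q$-defect zero vanishes on every element whose order is divisible by $q$ (Isaacs, Theorem 8.17). That fact already produces vanishing at the level of $N$; the entire content of the lemma is to promote this to vanishing at the level of $G$, and the bridge between the two levels will be Clifford's theorem.

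Concretely, let $\theta\in\irr N$ be the given character of $q$-defect zero, so that $q\nmid |N|/\theta(1)$, and fix an element $x\in N$ with $q$ dividing the order of $x$. First I would choose an irreducible character $\chi\in\irr G$ lying over $\theta$; such a $\chi$ exists, since any irreducible constituent of the induced character $\theta^G$ lies over $\theta$. By Clifford's theorem the restriction decomposes as $\chi_N = e\sum_{i=1}^{t}\theta_i$, where $\theta=\theta_1,\dots,\theta_t$ are the distinct $G$-conjugates of $\theta$ and $e$ is the associated ramification index.

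The key observation is that $q$-defect zero is invariant under $G$-conjugation: each $\theta_i=\theta^{g_i}$ satisfies $\theta_i(1)=\theta(1)$, and $|N|$ is unchanged by conjugation, so $q\nmid |N|/\theta_i(1)$ for every $i$. Hence each $\theta_i$ is an irreducible character of $N$ of $q$-defect zero, and by the cited fact each of them vanishes at $x$ (as $q$ divides the order of $x$). Therefore $\chi(x)=\chi_N(x)=e\sum_{i=1}^{t}\theta_i(x)=0$, which shows $x\in\gv G$, as required.

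The argument is short, and the only point demanding care is combining the Clifford decomposition of $\chi_N$ with the remark that having $q$-defect zero is a $G$-invariant property of the irreducible characters of $N$; the heavy lifting, namely the actual vanishing of a defect-zero character, is supplied by the classical theorem already in hand. I therefore expect no genuine obstacle beyond setting up the Clifford theory correctly, in particular making sure that \emph{all} constituents of $\chi_N$ — not merely $\theta$ itself — inherit defect zero and hence vanish at $x$.
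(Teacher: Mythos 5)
Your argument is correct, and it is essentially the same as the intended one: the paper gives no inline proof but cites the lemma from \cite[Lemma~2.7]{spiga}, whose proof is exactly your combination of Clifford's theorem with the observation that $q$-defect zero is preserved under $G$-conjugation, followed by an application of \cite[Theorem~8.17]{isaacs} to each constituent $\theta_i$ of $\chi_N$. No gaps; the one point needing care (that \emph{all} Clifford constituents, not just $\theta$, vanish at $x$) is precisely the one you handled.
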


As an immediate consequence of the two previous statements, we get the following result.

\begin{proposition}\cite[Corollary 2.9]{spiga}\label{prop1} Let $M$ be a nonabelian minimal normal subgroup of a group $G$ and suppose $p$ is a prime 
 divisor of $|M|$. If $p \geq 5$ then every element of $M$ with order divisible by $p$ is a vanishing element of $G$.
\end{proposition}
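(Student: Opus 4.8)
The plan is to reduce to the case of a single simple direct factor of $M$, apply Theorem~\ref{defect} there, and then reassemble a $p$-defect zero character on the whole of $M$ so that Lemma~\ref{defect2} applies directly.

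First I would recall that a nonabelian minimal normal subgroup decomposes as a direct product $M = S_1 \times \cdots \times S_n$ of isomorphic copies of a fixed nonabelian simple group $S$. Since $p$ divides $|M| = |S|^n$, it divides $|S|$. Now I would apply Theorem~\ref{defect} with $q = p$: because the hypothesis gives $p \geq 5$, none of the listed exceptional cases can occur, as these arise only for $q = 2$ or $q = 3$. Hence $S$ possesses an irreducible character $\vartheta$ of $p$-defect zero, meaning $|S|_p = \vartheta(1)_p$.

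Next I would lift this character to $M$. Choosing for each factor $S_i$ the corresponding copy $\vartheta_i$ of $\vartheta$, the external tensor product $\chi = \vartheta_1 \times \cdots \times \vartheta_n \in \irr{M}$ is irreducible of degree $\chi(1) = \prod_i \vartheta_i(1)$. Comparing $p$-parts, $\chi(1)_p = \prod_i \vartheta_i(1)_p = \prod_i |S_i|_p = |M|_p$, so $\chi$ is an irreducible character of $M$ of $p$-defect zero. Finally, taking $N = M$ and $q = p$ in Lemma~\ref{defect2} yields that every element of $M$ whose order is divisible by $p$ is a vanishing element of $G$, which is exactly the claim.

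Since the statement is flagged as an immediate consequence of the two preceding results, there is no serious obstacle here; the only point requiring any care is the passage from $S$ to $M$. One must verify that the external product of $p$-defect zero characters is again of $p$-defect zero, and this follows at once from the multiplicativity of degrees together with the factorization $|M|_p = \prod_i |S_i|_p$.
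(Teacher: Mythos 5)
Your argument is correct and follows exactly the route the paper indicates: since $p\geq 5$ excludes all exceptional cases of Theorem~\ref{defect}, each simple factor $S_i$ of $M$ has a $p$-defect zero character, the external tensor product gives one for $M$ itself, and Lemma~\ref{defect2} applied to the normal subgroup $M$ finishes the proof. The paper states this as an immediate consequence of those two results without writing out the tensor-product step, which you have correctly supplied.
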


Finally, we will freely use without references some basic facts of Character Theory such as Clifford Correspondence, properties of coprime actions, and
elementary properties of conjugacy class sizes; for instance, recall that a prime \(p\) does not divide the size of any conjugacy class of a group \(G\) if and only if \(G\) has a central Sylow \(p\)-subgroup.

\section{Theorem B}

We start with a simple lemma.

\begin{lemma}\label{products} Let $G$ be a group, $M$ a normal subgroup of $G$ with trivial centre and $C=\cent GM$. Then for any $g \in C$ and
 $h \in M$ the conjugacy class size $|(gh)^G|$ is divisible by both $|g^C|$ and $|h^M|$. Furthermore if $h$ or $g \in {\rm Van}(G)$ then
 $gh \in {\rm Van}(G)$.
\end{lemma}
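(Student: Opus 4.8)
The plan is to exploit the fact that, because $M$ has trivial centre, $M$ and $C=\cent GM$ are two normal subgroups of $G$ that intersect trivially and centralize each other. Indeed, $M\cap C=\cent MM=\zent M=1$, and $C\nor G$ since the centralizer of a normal subgroup is normal (conjugation by any $g\in G$ is an automorphism sending $\cent GM$ to $\cent G{M^g}=\cent GM$). Moreover, by the very definition of $C$, every element of $C$ commutes with every element of $M$; in particular $gh=hg$ for the given $g\in C$ and $h\in M$, so the product is a well-behaved commuting product and the two direct factors can be analysed independently.

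For the divisibility statement I would first identify the intersections of $\cent G{gh}$ with $M$ and with $C$. Using that $g\in C$ is centralized by every $m\in M$, one checks that $m$ fixes $gh$ under conjugation exactly when it fixes $h$, whence $M\cap\cent G{gh}=\cent Mh$; symmetrically, since every $c\in C$ centralizes $M$ and hence fixes $h$, one gets $C\cap\cent G{gh}=\cent Cg$. I would then invoke the elementary fact that for a normal subgroup $N\nor G$ and any $x\in G$ the index $|N:N\cap\cent Gx|=|N\cent Gx:\cent Gx|$ divides $|G:\cent Gx|=|x^G|$. Applying this with $x=gh$ and $N=M$, respectively $N=C$, yields that $|h^M|=|M:\cent Mh|$ and $|g^C|=|C:\cent Cg|$ both divide $|(gh)^G|$, as required.

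The vanishing part then follows immediately from Lemma~\ref{lemma3}. Since $M$ and $C$ are normal in $G$ with $M\cap C=1$, if $h\in M\cap{\rm Van}(G)$ then Lemma~\ref{lemma3}, applied to the pair $M,C$, gives $hn\in{\rm Van}(G)$ for every $n\in C$; taking $n=g$ produces $gh\in{\rm Van}(G)$. Interchanging the roles of the two factors, if instead $g\in C\cap{\rm Van}(G)$, the same lemma applied to the pair $C,M$ gives $gm\in{\rm Van}(G)$ for every $m\in M$, and $m=h$ yields $gh\in{\rm Van}(G)$.

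I do not expect any serious obstacle here: the one point requiring a little care is the computation of the two intersections $M\cap\cent G{gh}$ and $C\cap\cent G{gh}$, where the mutual centralizing of $M$ and $C$ must be used in the correct direction in each case (that $g$ is fixed by $M$, and that $h$ is fixed by $C$). Everything else reduces to the standard normal-subgroup divisibility lemma and a direct application of Lemma~\ref{lemma3}.
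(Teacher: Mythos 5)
Your proof is correct and follows essentially the same route as the paper's: both rest on the observation that $M$ and $C=\cent GM$ are normal, centralize each other, and intersect trivially (since $\zent M=1$), and both settle the vanishing claim by a direct appeal to Lemma~\ref{lemma3}. The only cosmetic difference is that the paper computes $|(gh)^{MC}|=|g^C|\,|h^M|$ inside the single normal subgroup $MC\cong M\times C$ and divides once --- which in fact yields the slightly stronger conclusion that the \emph{product} $|g^C|\,|h^M|$ divides $|(gh)^G|$ --- whereas your two separate computations of $M\cap\cent G{gh}=\cent Mh$ and $C\cap\cent G{gh}=\cent Cg$ give each factor dividing individually, which is all the lemma asserts.
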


\begin{proof} As $M$ has trivial centre $MC \cong M \times C$. Thus $|(gh)^{MC}| = |g^C||h^M|$. As $MC$ is normal in $G$ it follows
that $|(gh)^{MC}|$ divides $|(gh)^G|$. Finally, if $h$ or $g \in {\rm Van}(G)$ apply
Lemma \ref{lemma3}.
\end{proof}

The following lemma helps us to identify vanishing elements.

\begin{lemma}\label{lemma2} Suppose $G$ has a unique minimal normal subgroup $M$ which is nonabelian. Then for all $p \in \pi(G)$ there exists
 $g \in {\rm Van}(G)$ such that $p$ divides $|g^G|$. Thus $\pi(G) = \Vv  G$. Furthermore, $g$ can be chosen to lie in $M$.
\end{lemma}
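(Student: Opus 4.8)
The plan is to prove that for every prime $p \in \pi(G)$ there is a vanishing element $g$, which can moreover be chosen inside $M$, whose conjugacy class size $|g^G|$ is divisible by $p$. Since $M$ is the unique minimal normal subgroup and is nonabelian, $M$ is a direct product of isomorphic nonabelian simple groups, say $M \cong S^k$. The primes dividing $|G|$ split naturally into two families: those dividing $|M|$, and those dividing $|G:M|$ but not $|M|$. I would treat these two cases separately, since the first can be attacked using defect-zero characters of the simple components, while the second requires locating vanishing elements of $M$ whose centralizer in $G$ has index divisible by $p$.

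For a prime $p$ dividing $|M|=|S|^k$, the strategy is to produce a vanishing element of $G$ lying in $M$ with class size divisible by $p$. By Proposition~\ref{prop1}, if $p \geq 5$ then every $p$-singular element of $M$ already vanishes in $G$, so I would pick such an element inside a single simple direct factor $S$ and argue that its $G$-class size is divisible by $p$ (e.g.\ because a Sylow $p$-subgroup of $S$, hence of $M$, is noncentral in $G$, as $M$ has trivial centre and is nonabelian). For the small primes $p \in \{2,3\}$, I would invoke Theorem~\ref{defect}: unless $S$ falls into the short list of exceptions, $S$ has an irreducible character of $p$-defect zero, and then Lemma~\ref{defect2} gives that every $p$-singular element of $M$ is vanishing in $G$. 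The genuinely delicate point is handling the exceptional simple groups in Theorem~\ref{defect}, where no $p$-defect-zero character exists; there one must exhibit a vanishing $p$-singular element by other means, perhaps by passing to a proper character or by using that the relevant $p$ still fails to centralize, and verifying case by case that a suitable vanishing class of $p$-divisible size exists.

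For a prime $p$ dividing $|G:M|$ but not $|M|$, the element cannot be found simply by $p$-divisibility of $M$; instead the divisibility of the class size must come from the action of $G$ on $M$. Here I would first secure, by the preceding paragraph, a vanishing element $h \in M$ (taking any prime dividing $|M|$ to produce one via defect-zero arguments), and then exploit that $\cent G h$ has index in $G$ divisible by appropriate primes. The cleaner route is to use Proposition~\ref{three}: viewing $G$ as a permutation group on the set of simple direct factors of $M$, or on a suitable $G$-set, one obtains subsets $\Gamma_1,\Gamma_2$ whose joint stabilizer has index divisible by $p$. Combining this with Lemma~\ref{products} (applied to $M$ and $C = \cent G M$) to control how class sizes in $M$ and in $C$ multiply up into $G$, one can arrange a vanishing element of $M$ whose $G$-class size picks up the factor $p$.

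The main obstacle I anticipate is the coherent bookkeeping needed to guarantee \emph{simultaneously} that the chosen element $g$ lies in $M$, vanishes in $G$, and has class size divisible by the target prime $p$ — especially for primes outside $\pi(M)$, where vanishing must be inherited from $M$ while divisibility must be extracted from the permutation action on the simple factors. The exceptional cases of Theorem~\ref{defect} for $p \in \{2,3\}$ are the secondary difficulty, requiring an ad hoc verification that a vanishing $p$-singular element of the appropriate class size still exists in each listed group. Once both the ``inside $M$'' primes and the ``outside $M$'' primes are handled, the equality $\pi(G) = \Vv G$ follows immediately, since every prime divisor of $|G|$ has been realised as a divisor of some vanishing class size.
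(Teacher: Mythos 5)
Your overall case division (primes dividing $|M|$ versus primes dividing $|G|$ but not $|M|$) matches the paper's, and you correctly identify Proposition~\ref{three}, Theorem~\ref{defect}, Lemma~\ref{defect2} and Proposition~\ref{prop1} as the relevant tools. However, there are two genuine gaps. First, for $p\mid |M|$ you need a \emph{single} element $g\in M$ that is simultaneously vanishing in $G$ and has $|g^G|$ divisible by $p$, and your sketch does not achieve this: a $p$-singular element of a simple factor (vanishing by Proposition~\ref{prop1} when $p\geq 5$) need not have class size divisible by $p$ (e.g.\ elements of order $p$ in ${\rm PSL}(2,p)$ have class size coprime to $p$), while an element with $p\mid |g^{S_1}|$ need not be $p$-singular, hence need not vanish. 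The paper resolves this by showing that either \emph{every} nontrivial element of $M$ vanishes in $G$ (when all defect-zero characters exist), or, in the exceptional cases of Theorem~\ref{defect}, by invoking a result of Dolfi--Pacifici--Sanus producing an element of $S_1$ whose $M$-class size is divisible by all primes in $\pi(M)$ together with a vanishing character that extends to ${\rm Aut}(S_1)$, and then an extendibility lemma of Bianchi et al.\ to push the vanishing up to $G$. Your ``ad hoc verification'' for the exceptional groups is exactly where the substance lies, and it is left unproved.

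Second, for $p\nmid |M|$ your plan only covers half the case. Writing $N$ for the kernel of the action of $G$ on the simple factors, Proposition~\ref{three} only produces the factor $p$ in $|G:G_{\Gamma_1}\cap G_{\Gamma_2}|$ when $p$ divides $|G/N|$; if instead $p$ divides $|N|$ but neither $|M|$ nor $|G/N|$ (outer automorphisms acting trivially on the set of factors), the permutation-action argument yields nothing. The paper handles this subcase by embedding $N$ into $\prod_i N/\cent N{S_i}$, deducing that $S$ is of Lie type (since $p$ divides $|{\rm Out}(S)|$ but not $|S|$), and quoting \cite[Lemma 6(a)]{casolo} to find $g\in S_i$ with $p\mid |g^{N/\cent N{S_i}}|$, vanishing by Theorem~\ref{defect} and Lemma~\ref{defect2}. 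Moreover, in the subcase you do address, the key construction is missing: one takes $u,v\in S$ of \emph{different} orders with $o(v)$ divisible by a prime $r>3$, sets $g=\prod u_\alpha\prod v_\beta$ over $\Gamma_1$ and $\Gamma_2$, and the difference of orders forces $\cent Gg\leq G_{\Gamma_1}\cap G_{\Gamma_2}$; your appeal to Lemma~\ref{products} with $C=\cent GM$ cannot substitute for this, since $C=1$ under the hypothesis that $M$ is the unique minimal normal subgroup and is nonabelian.
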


\begin{proof} As $M$ is nonabelian, we have $M = S_1 \times \cdots \times S_n$ where every $S_i$ is isomorphic to a nonabelian simple
group $S$. We will denote by $N$ the kernel of the action
of $G$ by conjugation on $\{S_1, \ldots, S_n\}$. 

Let \(p\) be a prime divisor of \(|G|\); our aim is to find an element \(g\in M\cap{\rm Van}(G)\) such that \(p\) divides \(|g^G|\). We will treat separately the cases \(p\mid |M|\) and \(p\nmid |M|\). 

Let us start from the latter case. In this situation, \(p\) divides \(|G/M|\), and we first suppose that $p$ actually divides $|G/N|$. Using
Proposition~\ref{three}, we can choose two nonempty susbets $\Gamma_1, \Gamma_2$ of $\Omega = \{S_1, \ldots, S_n \}$, such that $\Gamma_1 \cap \Gamma_2 = \emptyset$ and
$p \in \pi(|G:G_{\Gamma_1} \cap G_{\Gamma_2}|)$. Certainly we can find nontrivial elements $u$ and $v$ in $S$ of different orders and such that the order of $v$ is divisible by some prime $r$ greater
than 3. For $S_{\alpha} \in \Gamma_1$ and $S_{\beta} \in \Gamma_2$
let $u_{\alpha}$ and $v_{\beta} $ correspond, respectively, to \(u\) and \(v\) (via the isomorphisms \(S_{i}\simeq S\)), and set $g$ to be the element in $M$ given by
$$g = \Pi u_{\alpha}\Pi v_{\beta}.$$ As $r$ divides the order of $v$, it also divides the order of $g$ and hence $g$ is vanishing
in $G$ by Proposition \ref{prop1}.
Let $x$ be an element in $\cent G g$, and consider $S_\alpha \in \Gamma_1$. Let $x$ act on $M$ by conjugation; since $M$ is nonabelian, the factors of $g$ are permuted. As the orders of $u$ and $v$ are different, it follows that $u_{\alpha}^x = u_{\alpha '}$ for some
$\alpha'$ with $S_{\alpha '} \in \Gamma_1$. Moreover it follows that $S_{\alpha}^x = S_{\alpha'} \in \Gamma_1$ and so
$x \in G_{\Gamma_1}$. Similarly we have $x \in G_{\Gamma_2}$, whence $\cent G g \leq G_{\Gamma_1} \cap G_{\Gamma_2}$. As a consequence,
$|G:G_{\Gamma_1} \cap G_{\Gamma_2}|$ divides $|g^G|$ and, in particular, $p$ divides $|g^G|$.

Suppose now that $p$ does not divide $|G/N|$, so $p$ divides $|N|$. Take \(i\) in \(\{1,...,n\}\), and adopt the bar convention for the factor group \(N/\cent N{S_i}\); then \(\overline{N}\) is an almost simple group with socle \(\overline{S_i}\simeq S\). Moreover, since \(\bigcap \cent N{S_i}=\cent N M=1\), the subgroup \(N\) can be embedded in the direct product of the factor groups \(N/\cent N{S_i}\) and, as these factor groups all have the same order (because \(G\) transitively permutes the \(S_i\)), it is easy to see that \(p\) divides \(|\overline{N}|\). However, $p$ does not divide $|M|$, and thus it does not divide $|S_i|$; 
as a consequence, $S$ is a simple group of Lie type (see for instance \cite{gorenstein}).
Now, by \cite[Lemma 6(a)]{casolo}, there exists an element $\overline{g} \in \overline{S_i}$ such that $p$ divides $|\overline{g}^{\overline{N}}|$, which is in turn a divisor of \(|g^{G}|\) (here \(g\) can be chosen in \(S_i\), hence in \(M\)). As $g$ is a vanishing element of \(G\) by Theorem \ref{defect} and Lemma~\ref{defect2}, we are done in this case as well.

Finally, let us assume \(p\mid |M|\). If \(M\) has an irreducible character of \(q\)-defect zero for every prime divisor \(q\) of \(|M|\), then every nontrivial element of \(M\) lies in \({\rm{Van}}(G)\) by Lemma~\ref{defect2}. Now, just take an element \(g\in S_1\) such that \(p\mid |g^{S_1}|=|g^M|\), and we are done. On the other hand, if there exists a prime \(q\in\pi(M)\) such that \(M\) does not have an irreducible character of \(q\)-defect zero (so the same holds for \(S_1\)), then we apply Lemma 2.2 in \cite{dolfi}: there exists an element $g$ of $S_1$ whose conjugacy class in \(M\) has size divisible by all primes in $\pi(M)$, and there exists an
irreducible character $\theta$ of $S_1$ such that $\theta(g) =0$ and $\theta$ extends irreducibly to ${\rm Aut}(S_1)$. Moreover, Lemma~5 of \cite{bianchi} yields that $\theta\times\theta\times\cdots\times\theta\in{\rm{Irr}}(M)$ extends irreducibly to $G$, and thus $g$ is vanishing in~$G$.  
\end{proof}

We are now ready to prove the proposition mentioned in the Introduction, that we state again.

\begin{proD} 
Let $G$ be a group, and suppose $G$ has a nonabelian minimal normal subgroup. Then $\V G = \Vv  G$.
\end{proD}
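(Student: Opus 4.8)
Fix a nonabelian minimal normal subgroup $M$ of $G$ and set $C=\cent GM$. Since $M$ is nonabelian and minimal normal, $\zent M=M\cap C=1$, so $MC=M\times C\nor G$; moreover $\zent M=1$ forces the centralizer of $\mathrm{Inn}(M)$ in $\aut M$ to be trivial, and from this it follows that $MC/C\cong M$ is the \emph{unique} minimal normal subgroup of $\o G:=G/C$ (any other would centralize $MC/C$). Thus $\o G$ satisfies the hypotheses of Lemma~\ref{lemma2}, so $\pi(\o G)=\Vv{\o G}$, and Lemma~\ref{lemma1} upgrades this to $\pi(G/C)\subseteq\Vv G$. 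I also record that $M$ contains a vanishing element of $G$: by Burnside's $p^aq^b$-theorem $|M|$ has a prime divisor $r\ge 5$, and Proposition~\ref{prop1} makes every element of $M$ of order divisible by $r$ vanishing in $G$.

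\textbf{Reduction and easy cases.} Since $\Vv G\subseteq\V G$ is immediate, it suffices to prove $\V G\subseteq\Vv G$. Fix $p\in\V G$. If $p$ divides $|G/C|$, the Setup already gives $p\in\Vv G$. So assume $p\nmid|G/C|$; then some $P\in\syl pG$ satisfies $P\le C$, while $p\in\V G$ forces $P\not\le\zent G$. If moreover $P\not\le\zent C$ (equivalently $p\in\V C$), choose $g\in C$ with $p\mid|g^C|$ together with an element $h\in M\cap\gv G$ from the Setup; by Lemma~\ref{products}, $gh\in\gv G$ and $|g^C|$ (hence $p$) divides $|(gh)^G|$, so $p\in\Vv G$.

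\textbf{The main case.} There remains the case $P\le\zent C$ with $P\not\le\zent G$. Here $P=\oh p{\zent C}$ is characteristic in $\zent C\nor G$, so $P\nor G$ is the unique (normal, abelian) Sylow $p$-subgroup of $G$, and $K:=\cent GP$ is a proper normal subgroup containing $MC$. The decisive observation is that any $y\in G\setminus K$ fails to centralize $P$, whence $\cent Gy\cap P=\cent Py<P$; since $P$ is the unique Sylow $p$-subgroup, this gives $p\mid|y^G|$. So I only need a vanishing element $y$ outside $K$. I would obtain it through Clifford theory of the abelian normal subgroup $P$: as $P\not\le\zent G$, the group $G$ acts nontrivially on $\irr P$, so there is $\lambda\in\irr P$ with proper stabilizer $T:=G_\lambda<G$, and $K\le T$. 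By the Clifford correspondence every $\chi\in\irr G$ lying over $\lambda$ is induced from $T$, hence vanishes on every element lying in no conjugate of $T$; such elements exist because a finite group is never the union of the conjugates of a proper subgroup, and since $K\le T$ each of them lies outside $K$. Any such $y$ is then vanishing with $p\mid|y^G|$, giving $p\in\Vv G$ and finishing the proof.

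\textbf{Where the difficulty lies.} The genuine obstacle is precisely this last case, when $p\in\V G$ yet the Sylow $p$-subgroup is central in $C$. The product machinery of Lemmas~\ref{lemma3} and~\ref{products} is useless here: every vanishing element it manufactures lies in $MC\le K=\cent GP$ and therefore centralizes $P$, so it can never detect $p$ in a class size. The resolution is to stop looking inside $M\times C$ and instead feed a nontrivial character $\lambda$ of the normal abelian $P$ into the Clifford correspondence; the inclusion $\cent GP\le G_\lambda$ is exactly what guarantees that the zeros of the induced character automatically avoid the centralizer of $P$, and hence witness the prime $p$.
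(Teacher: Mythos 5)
Your proposal is correct, and its first two cases coincide with the paper's proof: the same reduction to $\o G=G/C$ with $C=\cent GM$ (unique minimal normal subgroup, so Lemma~\ref{lemma2} plus Lemma~\ref{lemma1} handle every $p\in\pi(G/C)$), and the same use of Proposition~\ref{prop1} and Lemma~\ref{products} when $p\nmid|G/C|$ but $p\in\V C$. The divergence is exactly where you flagged the difficulty, namely when the Sylow $p$-subgroup $P$ is central in $C$ and hence abelian and normal in $G$. There the paper argues by contradiction through Theorem~A of \cite{dolfi}: if $p\notin\Vv G$ that theorem forces a normal $p$-complement, so $G=H\times P$ and $P\le\zent G$, contradicting $p\in\V G$. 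You instead produce a vanishing witness directly: since $P\nor G$ is abelian and noncentral, some $\lambda\in\irr P$ has proper stabilizer $T\ge\cent GP$, Clifford correspondence makes every $\chi\in\irr{G}$ over $\lambda$ induced from $T$ and hence zero off $\bigcup_g T^g$, and any element outside that union lies outside $\cent GP$ and so has class size divisible by $p$ because $P$ is the unique Sylow $p$-subgroup. Your route is more elementary and self-contained on this point --- Theorem~A of \cite{dolfi} is a deep, classification-dependent result, whereas your argument for this case uses only Clifford theory and the fact that a group is not the union of conjugates of a proper subgroup; what it buys beyond aesthetics is limited here, since the paper relies on \cite{dolfi} elsewhere anyway, but it is a genuine simplification of this step. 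All the auxiliary verifications you make (uniqueness of the minimal normal subgroup of $G/C$, existence of a prime $r\ge 5$ dividing $|M|$ via Burnside, $P=\oh p{\zent C}\nor G$, and $\cent Gy\cap P\in\syl p{\cent Gy}$) are sound.
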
 

\begin{proof} Let $M$ be a nonabelian minimal normal subgroup of $G$, and set $C=\cent GM$. 
Then $\overline{G} = G/C$ has a unique minimal
normal subgroup, which is isomorphic to $M$. Suppose $p \in \V G$. If $p \in \pi(\overline{G})$ then $p \in \Vv  {\overline{G}}$ by Lemma \ref{lemma2}, and hence 
 $p \in \Vv  G$ by Lemma~\ref{lemma1}.
Thus we can assume that $p$ does not divide $|\overline{G}|$. If $C$ does not have a central Sylow \(p\)-subgroup, then there exists $g \in C$ with $p$ dividing $|g^C|$. As $M$ is nonabelian, there exists $h \in M\cap{\rm{Van}}(G)$ by Proposition \ref{prop1}. Now apply Lemma \ref{products} to get that $gh$ is vanishing in $G$ with conjugacy class size
divisible by $p$, thus $p \in \Vv  G$ as required. Finally, suppose
that $C$ has a central Sylow $p$-subgroup \(P\). Then $P$, which is a Sylow $p$-subgroup of \(G\) as well, is (abelian and) normal in $G$. If $p \not\in \Vv  G$ then, by \cite[Theorem A]{dolfi}, $G$ has a normal $p$-complement, thus $P$ is central in \(G\). 
But this would contradict $p \in \V G$, and the proof is complete.
\end{proof}

Thus, when $\fit  G=1$ we have established that $\Vv  G=\pi(G) $. We now turn our attention to edges in the vanishing graph and, after the following proposition, we will prove Theorem B.

\begin{proposition}\label{almost} Let $G$ be an almost simple group with socle \(S\), and  let $p$, $q$ be distinct
 primes in $\pi(G)$. Then $p$ and \(q\) are adjacent vertices of \(\gammava G\). Moreover, there exists an element $g \in S$ such that $pq$ divides $|g^G|$ and
 $g$ is vanishing in $G$.
\end{proposition}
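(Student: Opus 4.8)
The plan is to exhibit a single element $g\in S$ that is simultaneously vanishing in $G$ and has $|g^G|$ divisible by $pq$; this forces $p$ and $q$ to divide a common vanishing class size, hence to be adjacent in $\gammava G$, and it delivers the ``moreover'' clause at the same time. I would treat the two requirements—divisibility by $pq$ and vanishing—separately and then merge them. For the divisibility, recall that for $g\in S$ the $G$-class $g^G$ is contained in $S$ and is a union of $S$-classes transitively permuted by $G$; hence $|g^S|$ divides $|g^G|$, and in fact $|g^G|=|g^S|\cdot|G:S\cent G g|$. Thus a prime of $\pi(S)$ can be captured through $|g^S|$, while a prime in $\pi(G)\setminus\pi(S)$—necessarily dividing $|G:S|$, and so $|\out S|$—can only be captured through the outer index $|G:S\cent G g|$.

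For the vanishing I would rely on the defect-zero machinery. By Lemma~\ref{defect2}, if the order of $g$ is divisible by a prime $r$ for which $S$ affords an irreducible character of $r$-defect zero, then $g\in{\rm Van}(G)$; moreover, by Proposition~\ref{prop1}, any $g\in S$ whose order is divisible by a prime $\geq 5$ already lies in ${\rm Van}(G)$. The decisive input is Theorem~\ref{defect}: the only primes $r$ for which a nonabelian simple $S$ can fail to have an $r$-defect-zero character lie in $\{2,3\}$, and this happens only for the finitely many groups listed there.

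This suggests a dichotomy. In the first case $S$ is not among the exceptional groups of Theorem~\ref{defect}, so $S$ has an $r$-defect-zero character for every $r\in\pi(S)$; then \emph{every} nontrivial element of $S$ is vanishing in $G$ (its order is divisible by some $r\in\pi(S)$, and Lemma~\ref{defect2} applies), vanishing becomes automatic, and the task collapses to producing a nontrivial $g\in S$ with $pq\mid|g^G|$. This last statement is exactly the ordinary (non-vanishing) assertion for the almost simple group $G$, namely \cite[Theorem~9]{casolo}, whose proof in fact produces such an element inside the socle and also accounts for primes of $\pi(G)\setminus\pi(S)$ coming from field or graph automorphisms. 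In the second case $S$ is one of the exceptional groups; for all of these $\out S$ is a $2$-group while $2\in\pi(S)$, so $\pi(G)=\pi(S)$ and no outer primes arise. Here I would invoke \cite[Lemma~2.2]{dolfi}, which supplies $g\in S$ with $\pi(|g^S|)=\pi(S)\supseteq\{p,q\}$ together with a character $\theta\in\irr S$ satisfying $\theta(g)=0$ that extends irreducibly to $\aut S$; restricting this extension to $G\leq\aut S$ yields an irreducible character of $G$ vanishing at $g$, so $g\in{\rm Van}(G)$, while $pq\mid|g^S|$ divides $|g^G|$.

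The main obstacle is the divisibility in the first case: producing, uniformly across all non-exceptional simple groups and for every pair of primes, an element of the socle whose $G$-class size is divisible by $pq$. This is the substance of \cite[Theorem~9]{casolo} and requires the usual family-by-family analysis—regular semisimple or unipotent elements together with maximal tori for groups of Lie type, suitable cycle types for alternating groups, and explicit class data for the sporadic groups—the most delicate point being the primes $p\in\pi(G)\setminus\pi(S)$ contributed by field automorphisms (as for a Suzuki group extended by a field automorphism of order $3$), where divisibility must be read from the outer index $|G:S\cent G g|$ rather than from $|g^S|$. The only gain of the present statement over the ordinary one is that, once such a $g$ has been located, the defect-zero dichotomy makes its vanishing automatic, so nothing further is needed to pass from the class-size graph to the vanishing graph.
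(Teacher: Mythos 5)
Your proposal is correct and follows essentially the same route as the paper: it combines the Casolo--Dolfi class-size results (producing an element of the socle whose $G$-class size is divisible by $pq$, including for primes of $\pi(G)\setminus\pi(S)$) with the Granville--Ono defect-zero criterion, falling back on \cite[Lemma~2.2]{dolfi} for the exceptional groups of Theorem~\ref{defect}. The only differences are cosmetic: you organize the cases by whether $S$ is exceptional rather than by whether $p,q\in\pi(S)$, and the paper cites the proof of \cite[Proposition~7]{casolo} rather than that of \cite[Theorem~9]{casolo} for the outer-prime case.
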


\begin{proof} If $p,q \in \pi(S)$, then \(p\) and \(q\) are adjacent vertices of \(\Gamma(S)\) by \cite[Theorem 9]{casolo}. So there exists $g \in S$ with $pq$ dividing
$|g^S|$ and thus $|g^G|$. If $S$ has an irreducible character of $q$-defect zero for all primes $q$, then $g$ is vanishing in \(G\) by 
Lemma \ref{defect2} and we are done. Thus, we can assume $S$ does not have an irreducible character of $q$-defect zero for some prime $q$, and the same argument as in the last paragraph of Lemma~\ref{lemma2} yields the conclusion.

If $p$ or $q$ does not divide $|S|$, then $S$ is a simple group of Lie type. We note that in the proof of \cite[Proposition 7]{casolo} the
authors produce, in each case, an element of $S$ with conjugacy class size divisible by $p$ and $q$ in $G$. Moreover, since
$S$ is of Lie type, this element will be vanishing in $G$ by Lemma \ref{defect2}.
\end{proof}

\begin{thmB} Let $G$ be a group with trivial Fitting subgroup. Then every prime divisor of \(|G|\) is in \(\Vv G\), and \(\gammava G\) is a complete graph.
\end{thmB}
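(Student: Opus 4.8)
The plan is to reduce the general trivial-Fitting case to the almost-simple case handled by Proposition~\ref{almost}, using the structure of the generalized Fitting subgroup. Let me think about how the pieces fit together.The plan is to exploit the hypothesis $\fit G=1$ in order to reduce, as much as possible, to the almost-simple situation already handled in Proposition~\ref{almost}. When the Fitting subgroup is trivial, the socle of $G$ coincides with the generalized Fitting subgroup $\fit 2 G$ and is a direct product $M=M_1\times\cdots\times M_k$ of nonabelian minimal normal subgroups, each $M_i$ being a direct power of a nonabelian simple group; moreover $\cent G M=1$, so $G$ embeds in $\aut M$. I would first record these standard structural facts. The point of the trivial centre is that it lets us apply Lemma~\ref{products} freely: for a normal subgroup $M_i$ with trivial centre and $C=\cent G{M_i}$, any product of a vanishing element of $M_i$ with an element of $C$ is again vanishing and its class size is divisible by the product of the two individual class sizes in $M_i$ and in $C$.

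For the first assertion, that every $p\in\pi(G)=\pi(M)\cup\pi(G/M)$ lies in $\Vv G$, I would argue exactly as in Lemma~\ref{lemma2} but one minimal normal subgroup at a time. Indeed $\cent G{M_i}$ has trivial Sylow-$p$ structure issues removed because $\fit G=1$ forces $\cent G M=1$; so for each $M_i$ one has a quotient $G/\cent G{M_i}$ that is almost simple (its socle being $M_i$ when $M_i$ is simple, or more generally a group with a unique nonabelian minimal normal subgroup to which Lemma~\ref{lemma2} applies). Since $\pi(G)=\bigcup_i \pi(G/\cent G{M_i})$, every prime of $G$ appears in one of these quotients, and Lemma~\ref{lemma2} produces a vanishing element there whose class size is divisible by $p$; pulling back via Lemma~\ref{lemma1} gives $p\in\Vv G$.

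For the completeness of $\gammava G$, fix distinct primes $p,q\in\pi(G)$; I must produce a single vanishing element of $G$ whose class size is divisible by $pq$. The clean case is when some component $M_i$ has both $p,q\in\pi(G/\cent G{M_i})$: then the almost simple group $G/\cent G{M_i}$ furnishes, by Proposition~\ref{almost}, a vanishing element with class size divisible by $pq$, and Lemma~\ref{lemma1} transfers this to $G$. The genuine obstacle is the \emph{split} case, where no single component sees both primes: say $p$ is visible only through some $M_i$ and $q$ only through some $M_j$ with $i\neq j$. Here I would take a vanishing element $g_i\in M_i$ with $p\mid|g_i^{M_i}|$ and an element $g_j\in M_j$ (vanishing or not) with $q\mid|g_j^{M_j}|$, both guaranteed by the first part of the argument via Lemma~\ref{lemma2}; since $M_i$ and $M_j$ are distinct direct factors of $M$ with trivial intersection and trivial centralizer issues, Lemma~\ref{products} (applied with $M=M_i$ and $g_j$ playing the role of an element of $\cent G{M_i}$, as $M_j\leq\cent G{M_i}$) shows that $g_ig_j$ is vanishing in $G$ and its class size is divisible by both $|g_i^{M_i}|$ and $|g_j^{M_j}|$, hence by $pq$. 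The main point to verify carefully is that $M_j\leq\cent G{M_i}$ and that the element $g_j\in M_j$ indeed contributes a factor of $q$ to the full $G$-class size; this is where the product decomposition and Lemma~\ref{products} must be invoked with the correct normal subgroups, and it is the step I expect to require the most attention.
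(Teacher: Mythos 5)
There is a genuine gap, and it sits exactly where the paper has to work hardest. Your ``clean case'' assumes that if both $p$ and $q$ lie in $\pi(G/\cent G{M_i})$ for a single minimal normal subgroup $M_i$, then Proposition~\ref{almost} produces a vanishing element with class size divisible by $pq$. But $M_i$ is in general a direct product $S_1\times\cdots\times S_n$ of $n\geq 2$ copies of a simple group, so $G/\cent G{M_i}$ is \emph{not} almost simple, and Proposition~\ref{almost} does not apply to it. Lemma~\ref{lemma2} cannot substitute here either: it only produces, for each single prime, some vanishing element whose class size that prime divides; it says nothing about realizing both primes on the same element. Handling this case is the core of the paper's proof: after reducing to $\cent G M=1$, one introduces the kernel $N$ of the permutation action of $G$ on $\{S_1,\ldots,S_n\}$ and splits into the cases $\{p,q\}\subseteq\pi(G/N)$, $\{p,q\}\subseteq\pi(N)$, and the mixed case, using Proposition~\ref{three} to find disjoint subsets $\Gamma_1,\Gamma_2$ with $pq$ dividing $|G:G_{\Gamma_1}\cap G_{\Gamma_2}|$ and explicitly building an element $g=\Pi u_\alpha\Pi v_\beta$ whose centralizer is trapped in $G_{\Gamma_1}\cap G_{\Gamma_2}$ and which is forced to be vanishing (via Proposition~\ref{prop1} or the defect-zero machinery). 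None of this is present in your outline, and without it the theorem is only proved when every minimal normal subgroup is simple.

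A secondary point: in your ``split case'' you want $g_i\in M_i$ with $p\mid|g_i^{M_i}|$, but Lemma~\ref{lemma2} only guarantees $p\mid|g_i^{G}|$ (indeed $p$ need not divide $|M_i|$ at all), so Lemma~\ref{products} as you invoke it does not deliver the factor $p$; one has to argue instead via $\cent G{g_ig_j}=\cent G{g_i}\cap\cent G{g_j}$, or run the case division of step (i) of the paper's proof, where the roles of $M$ and $C=\cent G M$ are kept asymmetric precisely for this reason.
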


\begin{proof} Let $G$ be a counterexample of minimal order to our statement; thus, $\fit  G=1$ and there exist two distinct prime divisors $p$ and $q$ of $|G|$ such that $\{p, q\}$ is not an edge of \(\gammava G\). Let
$M = S_1 \times \cdots \times S_n$ be a minimal normal subgroup of $G$ (where the $S_i$ are all isomorphic to a nonabelian simple group $S$) and let $N$ be the kernel of the action of $G$ on $\{S_1, \ldots, S_n\}$.
Also, denote $\cent G M$ by $C$ and let $\overline{G} = G/C$. We proceed through various steps.

\smallskip
\noindent (i) {\sl{$C$ is trivial.}}

Observe that $\overline{G} = G/C$ has a unique minimal normal subgroup $\overline{M} \cong M$, and therefore $\fit  {\overline{G}} =1$.
 For a proof by contradiction, let us assume $C\neq 1$.

 If $\{p,q\} \subseteq \pi(\overline{G})$ then, by the minimality of $G$, we get that $\{p,q\}$ is an edge of \(\gammava{\overline{G}}\); thus Lemma~\ref{lemma1} yields that $\{p,q\}$ is an edge of \(\gammava G\) as well, against our assumptions.

Suppose now $p,q \in \pi(C)$. As $\fit  C \leq \fit  G=1$, there exists $g \in C$ with $pq$ dividing $|g^C|$ by \cite[Theorem 9]{casolo}. 
Choose $h \in M$ with order divisible by some prime greater than $3$; then $h$ is vanishing in $G$ by Proposition \ref{prop1}. An application of Lemma \ref{products} yields that $gh$ is vanishing in $G$ with conjugacy class size divisible by $pq$, again a contradiction.

Thus, we can assume that $p$ divides $|\overline{G}|$ and $q$ does not. As $\overline{G}$ has a unique minimal normal subgroup $\overline{M}$, 
by Lemma \ref{lemma2}, there exists an element $\overline{g} \in \overline{M}$ with $p$ dividing $|\overline{g}^{\overline{G}}|$ 
and $\overline{g}$ vanishing
in $\overline{G}$. As $C$ does not have a central Sylow $q$-subgroup (because $\fit  C=1$), there exists an element
$h \in C$ with $q$ dividing $|h^C|$. For $g$ a preimage of $\overline{g}$ in $M$, by Lemma \ref{products}, we have that $gh$ is
vanishing in $G$ with conjugacy class size divisible by $pq$. This contradiction proves the claim.

Note that, for every \(i\in\{1,...,n\}\), the factor group \(N/\cent N{S_i}\) is an almost simple group with socle isomorphic to \(S\). Moreover, as \(C=1\), \(N\) can be embedded in the direct product of the factor groups \(N/\cent N{S_i}\), and since these all have the same order, we get \(\pi(N)=\pi(N/\cent N{S_i})\).

\smallskip
\noindent (ii) {\sl{$N$ is a proper subgroup of $G$.}}

In fact, if $N=G$, then $M$ is a simple group and $G$ is an almost simple group with socle $M$. The desired conclusion now follows from Proposition~\ref{almost}. 

\smallskip
\noindent (iii) {\sl{We have $\{p,q\}\not \subseteq \pi(G/N)$.}}

In fact, assuming the contrary, choose nonempty $\Gamma_1, \Gamma_2 \subseteq \Omega = \{S_1, \ldots, S_n \}$ with $\Gamma_1 \cap \Gamma_2 = \emptyset$ and $\{p,q\}\subseteq \pi(|G:G_{\Gamma_1} \cap G_{\Gamma_2}|)$ (see Proposition~\ref{three}). Also, let $u$ and $v$ be nontrivial elements of different orders in $S\simeq S_i$, such that the order of $v$ is divisible by some prime $r$ greater
than 3. Now, for $S_{\alpha} \in \Gamma_1$ and $S_{\beta} \in \Gamma_2$
let $u_{\alpha}$ and $v_{\beta} $ correspond, respectively, to \(u\) and \(v\) (via the isomorphisms \(S_{i}\simeq S\)). As in the proof of Lemma~\ref{lemma2}, we see that 
$g = \Pi u_{\alpha}\Pi v_{\beta}$ is vanishing
in $G$ and its conjugacy class size is divisible by \(pq\), contradicting the hypotheses.

\smallskip
\noindent (iv) {\sl{We have $\{p,q\}\not \subseteq \pi(N)$.}}

In view of the last paragraph of Claim (i), if $p$ and $q$ both divide $|N|$, then they both divide the order of \(\overline{N}=N/\cent N{S_1}$ as well. By Proposition \ref{almost}, there exists $\overline{g} \in\overline{S_1}$ such that
$pq\mid|\overline{g}^{\overline{N}}|$. Let $g \in S_1$ be a preimage of $\overline{g}$, and set $h \in S_2$ to be an element with order divisible
by a prime greater than 3. Then $gh$ is vanishing
in $G$ by Proposition \ref{prop1}; moreover, as $\overline{gh}=\overline{g}$, we get $pq\mid |\overline{gh}^{\overline{N}}|$. But $|\overline{gh}^{\overline{N}}|$ divides $|(gh)^N|$, which in turn divides $|(gh)^G|$, against the hypotheses. 

\smallskip
\noindent (v) {\sl{Final contradiction.}}

Our conclusion so far is that we can assume $p \in \pi(G/N)$ and $q \in \pi(N)$. Then $q$ divides $|N/\cent N{S_1}|$ and, as in the previous claim, there
exists an element $u \in S_1$ such that $q$ divides $|u^{N}|$ (note that \(u\) is a vanishing element of \(G\) unless possibly when it is a \(\{2,3\}\)-element). Now choose nonempty subsets $\Gamma_1$ and $\Gamma_2$ of
$\Omega = \{S_1, \ldots, S_n\}$ such that $p$ divides $|G: G_{\Gamma_1} \cap G_{\Gamma_2}|$. Take \(v\in S\) with \(o(v)\neq o(u)\) and such that, in the case when \(u\) is a \(\{2,3\}\)-element, the order of \(v\) is divisible by a prime greater than \(3\). As in the proof of Lemma~\ref{lemma2}, define $g = \Pi u_{\alpha} \Pi v_{\beta}$ where $u_{\alpha}$ and \(v_{\beta}\) correspond to \(u\) and \(v\) respectively. As already seen, $g$ is vanishing in $G$ and $|g^G|$ is divisible by $p$. Moreover, as we can assume \(S_1\in\Gamma_1\), the image of $g$ in $N/\cent N{S_1}$
is $u$, and therefore $q$ divides $|g^{N}|$, which in turn divides $|g^G|$. Thus $g$ is a vanishing
element of $G$ with conjugacy class size divisible by $pq$, and this contradiction completes the proof of the theorem.
\end{proof}

\section{Theorem A}

In this section we prove the main result of this paper. Our key tool will be Lemma~\ref{regular} and its consequence Corollary~\ref{coro}. We start with a lemma concerning vanishing elements of alternating groups.

\begin{lemma}
\label{goodpartitions}
For \(n\geq 7\) and \(t\geq 2\), let \(x\) be a permutation in \(\alt n\) whose type (including fixed points) is \((t,...,t)\) or \((t,...,t,1)\). Then there exists an irreducible character \(\theta\) of \(\alt n\) which has an extension to \(\sym n\), and such that \(\theta(x)=0\).
\end{lemma}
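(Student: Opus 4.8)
The plan is to translate the statement into the ordinary representation theory of the symmetric group. Recall that the irreducible characters of $\sym n$ are the $\chi^\lambda$ indexed by partitions $\lambda$ of $n$, that $\chi^\lambda|_{\alt n}$ is irreducible precisely when $\lambda$ is not self-conjugate (i.e. $\lambda\neq\lambda'$), and that in this case $\chi^\lambda$ itself is an extension to $\sym n$ of this constituent; conversely, the irreducible characters of $\alt n$ that admit an extension to $\sym n$ are exactly those of this form. Since $x\in\alt n$, we have $\chi^\lambda|_{\alt n}(x)=\chi^\lambda(x)$. Thus it is enough to exhibit a non-self-conjugate partition $\lambda\vdash n$ with $\chi^\lambda(x)=0$, and I would do this by an explicit choice of $\lambda$ in each of the two admissible cycle types.

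First I would treat the type $(t,\dots,t,1)$, where $n=tk+1$ and $x$ has exactly one fixed point. Here I take $\lambda=(n-1,1)$. Using the standard formula $\chi^{(n-1,1)}(g)=\mathrm{fix}(g)-1$ (coming from the decomposition of the natural permutation character), we get $\chi^{(n-1,1)}(x)=1-1=0$; moreover $(n-1,1)$ is not self-conjugate for $n\geq 7$, so this case is settled.

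The type $(t,\dots,t)$, with $n=tk$ and $x$ fixed-point-free, splits according to $t$. For $t\geq 3$ I would use $\lambda=(n-2,2)$: decomposing the permutation character on $2$-subsets as $\chi^{(n)}+\chi^{(n-1,1)}+\chi^{(n-2,2)}$ yields $\chi^{(n-2,2)}(g)=\tfrac12\big(\mathrm{fix}(g)^2-3\,\mathrm{fix}(g)\big)+c_2(g)$, where $c_2(g)$ is the number of $2$-cycles of $g$; since $x$ has no fixed points and (as $t\geq 3$) no $2$-cycles, this vanishes, and $(n-2,2)$ is non-self-conjugate for $n\geq 7$. The remaining case $t=2$ (so $n=2k$ with $k\geq 4$) is the delicate one, both because the elementary character formulas above no longer vanish and, more seriously, because every $2$-core is a self-conjugate staircase, so one cannot simply take a $2$-core of the correct size as $\lambda$. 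I would instead invoke the Murnaghan--Nakayama rule: $\chi^\lambda$ on the class $(2^k)$ is computed by successively stripping $k$ dominoes, and it equals $0$ whenever $\lambda$ has non-empty $2$-core (no stripping sequence can then reach the empty partition). Taking $\lambda=(2k-3,2,1)$, an abacus computation shows its $2$-core is $(3,2,1)\neq\emptyset$, whence $\chi^{(2k-3,2,1)}(x)=0$; and $(2k-3,2,1)$ is non-self-conjugate for $k\geq 4$.

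The main obstacle is exactly this $t=2$ subcase. For all $t\geq 3$, and for the $(t,\dots,t,1)$ type, a single low-degree and manifestly non-self-conjugate character is forced to vanish by an elementary fixed-point/pair-counting identity; but for $t=2$ one must produce a non-self-conjugate partition with prescribed non-empty $2$-core and verify the vanishing through the combinatorics of domino stripping rather than a closed character formula. Finally, I would check the boundary constraints ($n\geq 7$ throughout, and $k\geq 4$ when $t=2$) to ensure that each chosen $\lambda$ is a genuine partition of $n$ and is indeed non-self-conjugate.
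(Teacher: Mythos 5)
Your proof is correct and takes essentially the same route as the paper: in both cases one exhibits an explicit non-self-conjugate partition \(\lambda\vdash n\) with \(\chi^{\lambda}(x)=0\) and takes \(\theta\) to be the restriction of \(\chi^{\lambda}\) to \(\alt n\). The only differences are in the bookkeeping --- the paper uses \((n-1,1)\), \((n-t-1,t,1)\) and \((n-3,2,1)\) (the last coinciding with your \((2k-3,2,1)\) when \(t=2\)) and verifies the vanishing by Murnaghan--Nakayama throughout, whereas you substitute \((n-2,2)\) with an elementary fixed-point/transposition count for \(t\geq 3\) and a \(2\)-core argument for \(t=2\), all of which checks out.
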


\begin{proof}
Assume that \(x\) is of type \((t,...,t)\), whence \(n=kt\) for a suitable integer \(k\). If \(k\geq 3\), consider the partition \(\mu=(n-t-1,t,1)\) of \(n\) (which is not self-associate), and let \(\chi_{\mu}\in\irr{\sym n}\) be the corresponding character. Then the Murnaghan-Nakayama formula (see for instance \cite[Theorem 4.10.2]{sagan}) yields \(\chi_{\mu}(x)=0\), and \(\theta\) can be chosen to be the (irreducible) restriction of \(\chi_{\mu}\) to \(\alt n\). In the case when \(k\leq 2\), we can argue as above using the partition \((n-3, 2,1)\).

Finally, if \(x\) is of type \((t,...,t,1)\), then we use the partition \((n-1,1)\).
\end{proof}

Let \(V\) be an \(n\)-dimensional vector space over a field \(\Ef\), and let \(D\) be the set consisting of the elements \((\alpha_1,...,\alpha_n)\in V\) such that \(\sum\alpha_i=0\). The \((n-1)\)-dimensional vector space \(D\) has the natural structure of an \(\Ef[\alt n]\)-module, where \(\alt n\) acts by permuting coordinates, and we will be interested in the case when the characteristic \(q\) of \(\Ef\) is coprime with \(|\alt n|\), i.e., \(q>n\); in this case, \(D\) turns out to be an irreducible  \(\Ef[\alt n]\)-module, which is called the \emph{deleted permutation module} over \(\Ef\) (see \cite{goodwin}).

We will deal with the module \(D\) regarded as an (irreducible) \(\Ef^{\times}\times \alt n\) module over \(\Ef\), where \(\Ef^{\times}\) denotes the multiplicative group of \(\Ef\), acting on \(D\) by scalar multiplication.

\begin{lemma}
\label{deleted} 
Let \(D\) be the deleted permutation module for \(\alt n\) over the field \(\Ef\), where the characteristic of \(\Ef\) is larger than \(n\). Let \(d=(\alpha_1,\alpha_2,...,\alpha_{n-1},\beta)\in D\) be such that the \(\alpha_i\) are pairwise distinct elements of \(\Ef^{\times}\). Regarding \(D\) as a module for \(G=\Ef^{\times}\times\alt n\), assume that the element \((\lambda,x)\in G\) centralizes \(d\). Then, if \(t\) is the order of \(x\), the type of \(x\) (including fixed points) is either \((t,...,t)\) or \((t,...,t,1)\).
\end{lemma}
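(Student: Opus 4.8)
The plan is to translate the centralizing condition $(\lambda,x)\cdot d = d$ into arithmetic relations running along the cycles of $x$, and then to exploit the pairwise distinctness (and nonvanishing) of the first $n-1$ coordinates of $d$. Writing $d=(d_1,\dots,d_n)$ with $d_i=\alpha_i$ for $i<n$ and $d_n=\beta$, the hypothesis says that $d_i=\lambda\,d_{x^{-1}(i)}$ for every $i$. Following a cycle $(i_1\,i_2\,\cdots\,i_\ell)$ of $x$ this propagates to $d_{i_{j+1}}=\lambda\,d_{i_j}$, and returning to the start gives $d_{i_1}=\lambda^\ell d_{i_1}$. Hence, on any cycle where $d$ is nonzero, $\lambda^\ell=1$ and the values are $d_{i_1},\lambda d_{i_1},\dots,\lambda^{\ell-1}d_{i_1}$; on any cycle where some value vanishes, every value on it vanishes.

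First I would record two elementary constraints. Since $\alpha_1,\dots,\alpha_{n-1}\in\Ef^{\times}$, the entry $d_n=\beta$ is the only one that can vanish, so $d$ has \emph{at most one} zero coordinate; consequently any cycle carrying a zero value is the singleton $\{n\}$, and this happens exactly when $\beta=0$. Next, let $m$ be the multiplicative order of $\lambda$. On a nonzero cycle of length $\ell$ we have $\lambda^\ell=1$, so $m\mid\ell$; and because $\lambda$ has order $m$, the values $\lambda^{j}d_{i_1}$ repeat with exact period $m$, so such a cycle carries precisely $m$ distinct values.

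Now the distinctness of the $\alpha_i$ pins down the cycle lengths. On any cycle contained in $\{1,\dots,n-1\}$ all coordinate values are distinct $\alpha_i$'s, so the number of distinct values equals the length $\ell$; comparing with the preceding remark forces $\ell=m$. The one cycle that could behave differently is the cycle through the index $n$, and this is where I expect the main (though mild) obstacle, because $\beta$ need neither be nonzero nor distinct from the $\alpha_i$'s lying on the same cycle. I would resolve this by a counting argument using the at-most-one-zero fact: if $\beta\neq 0$, the cycle of length $\ell_0$ through $n$ still carries at least $\ell_0-1$ distinct values (the distinct $\alpha_i$'s on it), so $\ell_0-1\le m$; combined with $m\mid\ell_0$ this gives $\ell_0=m$ whenever $m\ge 2$.

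Finally I would assemble the cases. If $\beta=0$, then $n$ is a fixed point and, since a nonzero fixed point would force $\lambda=1$, for $m\ge 2$ every other cycle has length $m$, so $x$ has type $(m,\dots,m,1)$; if $\beta\neq 0$ and $m\ge 2$, there are no fixed points and every cycle has length $m$, so the type is $(m,\dots,m)$. In either case $t=o(x)=m$, yielding the claimed types. The degenerate case $m=1$ (that is, $\lambda=1$) is handled separately and is where membership $x\in\alt n$ enters: then $x$ fixes $d$, so $d$ is constant on $x$-orbits, and the distinct nonzero values $\alpha_i$ leave no room for a nontrivial orbit except possibly a single transposition pairing $n$ with some $j$ where $\beta=\alpha_j$; such an $x$ is odd, hence $x=1$ and the type is $(1,\dots,1)$. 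This completes the proposed argument.
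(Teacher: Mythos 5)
Your proof is correct. It follows the same basic strategy as the paper's --- propagate the relation $d_i=\lambda d_{x^{-1}(i)}$ around the cycles of $x$ and exploit the pairwise distinctness of the $\alpha_i$ --- but the mechanism for pinning down the cycle lengths is different. The paper first shows that $\lambda=1$ forces $x=1$ (exactly your degenerate case), and then, for a symbol $j\in\{1,\dots,n-1\}$ lying in an orbit of length $s$, observes that $(\lambda^{s},x^{s})$ also centralizes $d$ while $x^{s}$ fixes $j$; this gives $\lambda^{s}\alpha_j=\alpha_j$, hence $\lambda^{s}=1$, hence $(1,x^{s})$ centralizes $d$, hence $x^{s}=1$ and $s=t$. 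That power trick treats every cycle meeting $\{1,\dots,n-1\}$ uniformly, including the cycle through $n$, with no case distinction on $\beta$. You instead count distinct values along each cycle (exactly $m=o(\lambda)$ of them on a nonzero cycle), which immediately gives $\ell=m$ for cycles inside $\{1,\dots,n-1\}$ but requires the extra step $\ell_0-1\le m$ combined with $m\mid\ell_0$ to handle the cycle through $n$, since $\beta$ need not be nonzero or distinct from the $\alpha_i$; you carry that out correctly. Both arguments are complete and elementary; the paper's is slightly more uniform, while yours makes explicit that the common cycle length is the multiplicative order of $\lambda$, which then coincides with $t$.
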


\begin{proof}
Let \(d\) be an element of \(D\) as in the statement, and assume \((\lambda,x)\in \cent G d\). We first observe that, if \(\lambda=1\), then \(x\) is also \(1\). In fact, if \(x\) maps the symbol \(j\in\{1,...,n-1\}\) to \(jx\neq j\) then, as \((1,x)\in \cent G d\), the corresponding entries of \(d\) must coincide; but, the \(\alpha_i\) being pairwise distinct, we get \(jx=n\) and \(\alpha_j=\beta\). As a consequence, the only nontrivial cycle in \(x\) is possibly \((j,n)\), but this is a contradiction since \(x\) is an even permutation.

Next, let \(j\in\{1,...,n-1\}\) be a symbol lying in an \(\langle x\rangle\)-orbit of length \(s\), so \(s\) is a divisor of \(t=o(x)\). Since \((\lambda^s,x^s)\) centralizes \(d\), the \(j\)th entries of \(d\) and of \(d^{(\lambda^s,x^s)}\) must coincide; therefore we get \(\lambda^s\alpha_j=\alpha_j\), whence \(\lambda^s=1\). This yields that \((1,x^s)=(\lambda^s,x^s)\) centralizes \(d\), therefore \(x^s=1\) by the paragraph above, and \(s=t\). 

We conclude that every cycle of \(x\) involving at least one symbol in \(\{1,...,n-1\}\) has in fact length \(t\), and the proof is complete.
\end{proof}

\begin{lemma} 
\label{regular}
Let \(S\) be a nonabelian simple group that is not of Lie type, and let \(M=S_1\times\cdots\times S_k\) be the direct product of \(k\) copies of \(S\). Let \(q\) be a prime not dividing \(|S|\), and \(A\) a faithful \(M\)-module over the field \(\F\) with \(q\) elements. Assume that there are no regular orbits for the action of \(M\) on \(A\). Then \(S\) is isomorphic to \(\alt n\) for some \(n\geq 7\); moreover, there exists \(a\in A\) such that, for \(x=s_1\cdots s_k \in \cent M a\), each \(s_i\) is a permutation of type \((o(s_i),...,o(s_i))\) or \((o(s_i),...,o(s_i), 1)\).
\end{lemma}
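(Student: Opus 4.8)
The plan is to reduce the problem to the symmetric/alternating group setting via the representation theory of the module $A$, then invoke the two preparatory lemmas (Lemma~\ref{deleted} and Lemma~\ref{goodpartitions}) that were evidently set up precisely for this purpose. First I would analyze the structure of $A$ as an $\F[M]$-module. Since $q$ does not divide $|M|=|S|^k$, the action is coprime and $A$ is completely reducible; the absence of a regular orbit is the key hypothesis to exploit. I would reduce to the case of a single irreducible constituent on which $M$ acts, and more precisely to understanding the irreducible $\F[M]$-modules of the form $W_1\otimes\cdots\otimes W_k$ where each $W_i$ is an irreducible $\F[S_i]$-module. The strategy is to show that if $S$ is \emph{not} isomorphic to an alternating group $\alt n$ with $n\geq 7$, then $M$ must have a regular orbit on $A$, contradicting the hypothesis; this forces $S\cong\alt n$.

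The heart of the argument is the following counting/orbit dichotomy. A regular orbit exists precisely when some $a\in A$ has trivial stabilizer in $M$. I would estimate the number of elements of $A$ fixed by nontrivial elements of $M$: if $\sum_{1\neq m\in M}|C_A(m)| < |A|$, then a regular orbit exists. For a tensor product module over the direct product, fixed-point spaces factor in a controlled way, so this reduces to a statement about a single copy $S$ acting on a single irreducible $\F[S]$-module $W$. The claim to establish is that, apart from the deleted permutation module for $\alt n$, every nontrivial irreducible $\F[S]$-module (with $q\nmid|S|$, $S$ not of Lie type) has small enough fixed-point spaces to guarantee a regular orbit. Here is where Lemma~\ref{deleted} enters: it characterizes exactly which elements can centralize a suitably generic vector $d$ in the deleted permutation module, namely those whose cycle type is $(t,\dots,t)$ or $(t,\dots,t,1)$. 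So once I have isolated the deleted permutation module as the obstruction to a regular orbit, Lemma~\ref{deleted} delivers the precise cycle-type conclusion on the centralizing element $x=s_1\cdots s_k$ componentwise.

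For the construction of the specific element $a\in A$ in the conclusion, I would take $A$ (or the relevant constituent) to be a sum of deleted permutation modules for the factors and pick $a=(d_1,\dots,d_k)$ with each $d_i$ of the generic form required by Lemma~\ref{deleted} (pairwise distinct nonzero coordinates). An element $x=s_1\cdots s_k\in\cent M a$ centralizes each $d_i$ in the $i$th factor, so Lemma~\ref{deleted} applies to each $s_i$ separately and yields that $s_i$ has type $(o(s_i),\dots,o(s_i))$ or $(o(s_i),\dots,o(s_i),1)$, which is exactly what is asserted. The condition $n\geq 7$ appears because it is built into the hypotheses of Lemmas~\ref{goodpartitions} and~\ref{deleted}, and because smaller alternating groups either are not simple, coincide with groups of Lie type, or behave exceptionally.

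The main obstacle I anticipate is the module-theoretic classification step: proving that the deleted permutation module is the \emph{only} faithful irreducible $\F[S]$-module failing to admit a regular orbit, across all nonabelian simple $S$ not of Lie type (that is, the alternating groups and the $26$ sporadic groups). This likely requires combining a general fixed-point counting bound with explicit information (minimal faithful module dimensions, element orders, centralizer sizes) for the sporadic groups and for alternating groups in their non-natural representations, to rule out every case except the natural deleted permutation module of $\alt n$. The coprimality $q\nmid|S|$ and the largeness of the characteristic ($q>n$ in the alternating case) are what make the deleted permutation module irreducible and make the generic-vector argument of Lemma~\ref{deleted} available, so I would keep careful track of these hypotheses throughout.
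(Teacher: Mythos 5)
Your overall architecture matches the paper's: decompose $A$ into irreducible constituents, write each faithful constituent as a tensor product over the simple factors, identify the deleted permutation module as the only obstruction to a regular orbit, and then feed Lemma~\ref{deleted} the resulting generic vector. But the step you yourself flag as ``the main obstacle'' --- proving that the deleted permutation module is the \emph{only} faithful irreducible coprime module for a non-Lie-type simple group on which there is no regular orbit --- is a genuine gap: you neither carry it out nor cite a result that does it, and it is far too heavy to leave as an anticipated case analysis over all sporadic groups and all non-natural representations of $\alt n$. The paper does not reprove this; it invokes Goodwin's theorem (\cite[Theorem~1]{goodwin}, see also K\"ohler--Pahlings), which states exactly that if $\K^{\times}\times S$ has no regular orbit on a faithful irreducible coprime module for such an $S$, then $S\simeq\alt n$ with $n\geq 7$ and the module is the deleted permutation module. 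Without that citation (or an equivalent self-contained argument, which would be a paper in itself), your proof is incomplete at its central point.

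Two further inaccuracies in your construction. First, you slide from ``tensor product'' to taking $A$ to be ``a sum of deleted permutation modules'' with $a=(d_1,\dots,d_k)$; in general a faithful irreducible constituent for $S_1\times\cdots\times S_h$ is $W_1\otimes\cdots\otimes W_h$, and an element $s_1\cdots s_h$ centralizing $d_1\otimes\cdots\otimes d_h$ does \emph{not} centralize each $d_i$ --- it only scales $d_i$ by some $\lambda_i^{-1}$ with $\prod\lambda_i=1$. This is precisely why Lemma~\ref{deleted} is stated for $\Ef^{\times}\times\alt n$ rather than for $\alt n$, and why Goodwin's theorem must be applied to $\K_i^{\times}\times S_i$; your phrase ``centralizes each $d_i$ in the $i$th factor'' papers over the point that makes the lemma work. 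Second, the tensor decomposition is only available over a splitting field, so one must pass to an extension $\Ef$ of $\F$, choose the vector there, and descend via a sum over Galois conjugates using the containment $\cent N v\leq\cent N w$ (the paper uses \cite[Lemma~3.1]{fleishmann} and \cite[VII, Theorem~1.16]{huppert} for this); your sketch works throughout as if $\F$ were already a splitting field, which is not justified.
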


\begin{proof} Since \(q\) is coprime with the order of \(M\), the \(\F[M]\)-module \(A\) is semisimple. Assuming that there is no regular orbit for the action of \(M\) on \(A\), our aim will be to construct an element \(a\) of \(A\) yielding the desired conclusions; to this end, we will choose suitable vectors from each simple constituent of \(A\).  

Let \(V\) be such a constituent. Up to renumbering, we can assume that the kernel of the action of \(M\) on \(V\) is either trivial (and in this case we set \(h=k\)) or \(S_{h+1}\times\cdots\times S_k\) for a suitable \(h\in\{1,...,k-1\}\), so \(N=S_1\times\cdots\times S_h\) acts faithfully on \(V\). Let \(\Ef\) be a finite field extension of \(\F\) that is a splitting field for \(S\), and consider the faithful \(\Ef[N]\)-module \(V^{\Ef}=V\otimes_{\F}\Ef\); if \(W\) is an irreducible constituent of \(V^{\Ef}\) and \(\chi\) is the corresponding character, we denote by \(\K\) the field extension of \(\F\) obtained by adjoining the set of values \(\{\chi(n)\mid n\in N\}\) to \(\F\) (so, \(\K\) is a subfield of \(\Ef\)). By Theorem~1.16 in \cite[VII]{huppert}, that we will freely use with no further reference throughout this proof, we get \[V^{\Ef}\simeq \bigoplus_{\xi\in{\rm{Gal}}(\K|\F)} W^{\xi}.\] Observe that, for \(w\in W\), the vector \[v=\sum_{\xi\in{\rm{Gal}}(\K|\F)} w^{\xi}\] lies in \(V\), and we have \(\cent N v\leq \cent Nw\) (see \cite[Lemma~3.1]{fleishmann}).
As \(W\) is a simple \(S_1\times\cdots\times S_h\)-module over a splitting field for each of the \(S_i\), Theorem~3.7.1 in \cite{gor} yields that the module \(W\) decomposes as a tensor product \(W_1\otimes\cdots\otimes W_h\), where each \(W_i\) is a simple \(\Ef[S_i]\)-module. In this situation, given an element of \(W\setminus\{0\}\) of the form \(w=w_1\otimes\cdots\otimes w_h\), it can be checked that \(x=s_1\cdots s_h\in N\) centralizes \(w\) if and only if \(w_i^{s_i}=\lambda_i^{-1}w_i\) for all \(i\in\{1,...,h\}\), where the \(\lambda_i\) are in \(\Ef\) and \(\prod \lambda_i=1\); in other words, if \(x\) centralizes \(w\) then, regarding \(W_i\) as an \(\Ef^{\times}\times S_i\)-module (\(\Ef^{\times}\) acting by scalar multiplication) the element \((\lambda_i, s_i)\) centralizes \(w_i\) for every \(i\in\{1,...,h\}\). 

Now, assume that \(\Ef^{\times}\times S_i\) does not have any regular orbit on \(W_i\), and let \(\K_i\) be the field extension of \(\F\) obtained by adjoining the values of the character of \(W_i\) (as a module for \(S_i\)). Denoting by \(Z\) an irreducible constituent of \(W_i\) regarded as a \(\K_i[S_i]\)-module, we get \(W_i\simeq Z^{\Ef}\), and so the field of values of the \(\K_i[S_i]\)-module \(Z\) is \(\K_i\) as well. We claim that the group \(\K_i^{\times}\times S_i\) does not have any regular orbit on \(Z\): assuming, for a proof by contradiction, that \(z\) lies in such an orbit, it is easy to check that the vector \(z\otimes 1\in W_i\) lies in a regular orbit for the action of \(\Ef^{\times}\times S_i\), which is not possible. Now, if \(Z_0\) denotes the \(\K_i[S_i]\)-module \(Z\) viewed as an \(\F[S_i]\)-module, \(Z_0\) turns out to be irreducible, and \(\K_i^{\times}\times S_i\) does not have any regular orbit on it: we are in a position to apply a theorem by D. P. M. Goodwin (\cite[Theorem~1]{goodwin}; see also \cite[Theorem~2.1 and Theorem~2.2]{koehler}), getting that \(S_i\simeq\alt n\) for some \(n\geq 7\) and \(Z_0\) is the deleted permutation module for \(S_i\) over \(\F\). Since this module is absolutely irreducible, we get \(\K_i=\F\), and \(W_i\simeq Z_0^{\Ef}\) is the deleted permutation module for \(S_i\) over \(\Ef\). Whenever we are in this situation (which does occur for some constituent \(V\), as otherwise \(M\) would have regular orbits on \(A\) against our assumptions), we choose \(d_i\in W_i\) as in Lemma~\ref{deleted}; this can be applied because, \(q\) being coprime with \(S\), we certainly have \(q>n\). In all other cases, choose \(d_i\) lying in a regular orbit for the action of \(\Ef^{\times}\times S_i\) on \(W_i\). Then set \(w=d_1\otimes\cdots\otimes d_h\in W\), and finally define \(v=\sum_{\xi\in{\rm{Gal}}(\K|\F)} w^{\xi}\in V\).

To sum up, let \(A=V_1\oplus\cdots\oplus V_n\) be a decomposition of \(A\) into simple \(\F[M]\)-constituents. For each \(V_j\), let \(v_j\) be a vector as defined in the paragraph above, and let \(a=\sum v_j\). In view of Lemma~\ref{deleted}, it can be checked that such an element \(a\) satisfies the conclusions of our statement. 
\end{proof}

The following consequence of Lemma~\ref{regular} is helpful in locating vanishing elements.

\begin{corollary}
\label{coro}
Let \(G\) be a group, and \(A\) an abelian minimal normal subgroup of~\(G\). Let \(M/N\) be a chief factor of \(G\) such that \(|A|\) is coprime with \(|M/N|\), and \(N=\cent M A\). Then every element of \(M\setminus N\) is a vanishing element of \(G\).
\end{corollary}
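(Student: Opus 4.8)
The plan is to use Clifford theory to reduce the statement to the construction of a single irreducible character of $M$ vanishing outside $N$, and then to build such a character from the (coprime) action of $M/N$ on the dual of $A$. First I would pin down the configuration. As $A$ and $M$ are normal in $G$ and $A$ is minimal normal, $A\cap M$ is $1$ or $A$; but $A\cap M=1$ would give $[A,M]=1$, hence $M=\cent{M}{A}=N$ and $M/N=1$, which is impossible. Thus $A\le M$, and since $A$ is abelian $A\le\cent{M}{A}=N$; as $N$ centralizes $A$ we even have $A\le\zent{N}$. So $A\nor N\nor M\nor G$ with $A,N,M$ all normal in $G$, the factor $M/N$ acts faithfully on $A$, and (as $|A|$ is coprime to $|M/N|$) it acts faithfully and coprimely on $\irr{A}$ as well.

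The key reduction is that \emph{it suffices to produce $\theta\in\irr{M}$ with $\theta(y)=0$ for all $y\in M\setminus N$}. Indeed, taking $\chi\in\irr{G}$ lying over such a $\theta$, Clifford's theorem gives $\chi_M=e\sum_j\theta^{g_j}$; for $m\in M\setminus N$ each value $\theta^{g_j}(m)=\theta(g_j^{-1}mg_j)$ vanishes, because $g_j^{-1}mg_j\in M\setminus N$ ($M$ and $N$ being normal in $G$), so $\chi(m)=0$. To obtain $\theta$ I would look for $\xi\in\irr{N}$ with inertia group $I_M(\xi)=N$: then $\theta=\xi^M$ is irreducible and automatically vanishes on $M\setminus N$, since no $M$-conjugate of an element of $M\setminus N$ can fall into the normal subgroup $N$.

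Now I would split according to whether $M/N$ has a regular orbit on $\irr{A}$. If some $\lambda\in\irr{A}$ has $I_M(\lambda)=N$, then any $\xi\in\irr{N}$ lying over $\lambda$ satisfies $I_M(\xi)\le I_M(\lambda)=N$ (as $A\le\zent{N}$ forces $\xi_A=\xi(1)\lambda$), hence $I_M(\xi)=N$, and the previous paragraph finishes the proof. A regular orbit is available in all but one configuration: recalling that the chief factor $M/N$ is a direct product of copies of a single simple group, the abelian case is standard, while for $M/N$ a product of nonabelian simple groups Lemma~\ref{regular} shows (in its contrapositive form, together with the known regular-orbit results for groups of Lie type) that a regular orbit exists unless $M/N$ is a direct product of copies of some $\alt{n}$ with $n\ge 7$.

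It remains to treat that exceptional case, which is the crux. Here Lemma~\ref{regular}, applied to the module $\irr{A}$, supplies a specific $\lambda\in\irr{A}$ whose stabilizer $T/N=\cent{M/N}{\lambda}$ consists entirely of elements all of whose simple components are permutations of type $(t,\dots,t)$ or $(t,\dots,t,1)$. For such components Lemma~\ref{goodpartitions} provides irreducible characters of $\alt{n}$, each extending to $\sym{n}$, that vanish on them. The idea is to assemble these into an irreducible character $\psi\in\irr{T}$ lying over $\lambda$ which vanishes on $T\setminus N$, and then to set $\theta=\psi^M$; the induced-character formula, together with the fact that every conjugate of an element of $M\setminus N$ landing in $T$ has its image in the good-type set $T/N$, forces $\theta$ to vanish on $M\setminus N$. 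The main obstacle is precisely this construction: one must push the vanishing characters of the alternating factors through the intermediate layer $N/A$ and across the extension of $\lambda$ to $T$ (this is where extendibility to $\sym{n}$, in the spirit of the argument closing Lemma~\ref{lemma2}, is used to guarantee that the relevant product characters extend), and one must ensure that a single irreducible constituent over $\lambda$ actually vanishes on all the required good-type classes—possibly by choosing $\lambda$, or the character $\psi$, in dependence on the given element $m$.
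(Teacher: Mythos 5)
Your preliminary reductions (showing $A\le M$, hence $A\le\zent N$), your Clifford-theoretic observation that a character of $M$ vanishing on $M\setminus N$ suffices, and your treatment of the case where $M/N$ has a regular orbit on $\irr A$ are all correct; the last of these is in substance the paper's first step (the paper induces from $I_G(\theta)$ straight to $G$ rather than from $N$ to $M$, but the effect is the same). However, there are two genuine gaps.

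First, you dispose of the Lie-type case by asserting that ``the known regular-orbit results for groups of Lie type'' guarantee a regular orbit, so that the only exceptional configuration is a product of copies of $\alt n$. Lemma~\ref{regular} deliberately excludes Lie type from its hypotheses, and the regular-orbit theorems of Goodwin and K\"ohler--Pahlings that it relies on do have exceptions among groups of Lie type (several small classical and exceptional groups appear in the exception lists for the $k(GV)$-problem), so this claim is unsupported and false in general. The paper handles Lie type by a different mechanism: by Theorem~\ref{defect} a simple group of Lie type has a character of $r$-defect zero for every prime $r$, hence so does $\overline M=M/N$, so by Lemma~\ref{defect2} every nontrivial element of $\overline M$ is a vanishing element of $G/N$, and therefore every element of $M\setminus N$ is vanishing in $G$. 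No regular orbit is needed there.

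Second, and more seriously, the alternating case---which you yourself call ``the crux'' and ``the main obstacle''---is not actually carried out. Your plan requires a single $\psi\in\irr T$ lying over the nontrivial character $\lambda$ of $A$ and vanishing on all of $T\setminus N$, which forces you to control characters through the layer $N/A$ and to solve an extendibility problem over $\lambda$; you give no construction, and it is not clear that one exists. The paper never attempts this. Having fixed $\theta\in\irr A$ with $I_{\overline M}(\theta)=\cent{\overline M}{a}$ for the element $a$ supplied by Lemma~\ref{regular}, it argues element by element: for $x\in M\setminus N$, either $\overline x$ stabilizes no $G$-conjugate of $\theta$, in which case the irreducible character of $G$ induced from $I_G(\theta)$ already vanishes at $x$; or every simple component of $\overline x$ has the cycle type prescribed by Lemma~\ref{regular}, in which case one uses a character of $M/N$ (so with all of $N$, not merely $A$, in its kernel) built from Lemma~\ref{goodpartitions}, which extends to $G/N$ by Lemma~5 of \cite{bianchi} precisely because the Murnaghan--Nakayama character extends to $\sym n$. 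Different elements of $M\setminus N$ are killed by different characters; insisting on one character of $M$ lying over $\lambda$ that vanishes on the whole of $M\setminus N$ is both stronger than what is needed and the source of the unresolved difficulty in your argument.
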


\begin{proof} Set \({\overline{G}}=G/N\) and, adopting the bar convention, assume that there exists an element \(a\) of \(A\) lying in a regular orbit for the action of \({\overline M}\); then, by coprimality, the same happens for the action of \({\overline M}\) on \(\irr A\). In other words, there exists an irreducible character \(\theta\) of \(A\) such that \(I_G(\theta)\cap M=N\), and every element \(x\) of \(M\setminus N\) clearly does not lie in \(\bigcup_{g\in G}I_G(\theta^g)\). Now, if \(\psi\) is an irreducible character of \(I_G(\theta)\) lying over \(\theta\), then \(\psi^G\) is an irreducible character of \(G\) which vanishes on \(G\setminus\bigcup_{g\in G} I_G(\theta^g)\), thus in particular \(\psi^G(x)=0\), as required.

Therefore, we may assume that there are no regular orbits for the action of \({\overline M}\) on \(A\). Since, by a well-known consequence of Brodkey's Theorem (\cite{brodkey}), regular orbits do exist if \({\overline M}\) is abelian, we may focus on the case when \({\overline M}= S_1\times\cdots\times S_k\), where the \(S_i\) are pairwise isomorphic nonabelian simple groups. 

Observe that every nontrivial element of \({\overline M}\) is a vanishing element of \({\overline{G}}\) provided \(S_1\) is a group of Lie type (Theorem~\ref{defect} and Lemma~\ref{defect2}), so we easily get the desired conclusion in this case. Therefore we can assume that \(S_1\) is not of Lie type, and we can apply Lemma~\ref{regular} with respect to the action of \({\overline{M}}\) on \(A\): we get \(S_1\simeq\alt n\) for some \(n\geq 7\), and we can choose an element \(a\in A\) satisfying the conclusions of Lemma~\ref{regular}. Now, by coprimality, we can consider a character \(\theta\in{\rm{Irr}}(A)\) whose inertia subgroup \(I_{\overline{M}}(\theta)\) coincides with \(\cent{\overline{M}} a\). 

Let \(x\) be an element in \(M\setminus N\). If \({\overline{x}}\) lies in \(I_{\overline{M}}(\theta)\), then each factor of \({\overline{x}}\) (in its decomposition into a product of elements of the \(S_i\)) is a permutation with the cyclic structure prescribed by Lemma~\ref{regular}, and the same of course holds also if \({\overline{x}}\) centralizes a \(G\)-conjugate of \(\theta\). In other words, either \({\overline{x}}\) does not centralize any \(G\)-conjugate of \(\theta\) (and in that case \(x\) is vanishing in \(G\), by the argument in the first paragraph of this proof), or it is as in the conclusions of Lemma~\ref{regular}. In the latter case, an application of Lemma~\ref{goodpartitions} (together with Lemma~5 of \cite{bianchi}) yields that \({\overline{x}}\) is a vanishing element of \({\overline{G}}\), and the proof is complete.
\end{proof}

We are ready to prove Theorem A, that we state again.

\begin{thmA} Let \(G\) be a group, and suppose \(G\) has a nonabelian minimal normal subgroup. If \(p\) and \(q\) are in \(\V G\), but there is no vanishing conjugacy class of \(G\) whose size is divisible by \(pq\), then \(G\) is \(\{p,q\}\)-solvable.
\end{thmA}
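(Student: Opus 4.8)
The plan is to argue by contradiction, choosing a counterexample \(G\) of minimal order. Since \(G\) has a nonabelian minimal normal subgroup, the Proposition asserting \(\V G=\Vv G\) shows that \(p\) and \(q\) are both vertices of \(\gammava G\), and the hypothesis says exactly that \(\{p,q\}\) is a non-edge of \(\gammava G\); thus the entire goal reduces to producing a single element of \({\rm Van}(G)\) whose conjugacy class size is divisible by \(pq\). First I would eliminate the case \(\fit G=1\): there Theorem~B guarantees that \(\gammava G\) is complete, an immediate contradiction. Hence \(\fit G\neq 1\), and I fix an abelian minimal normal subgroup of \(G\), the object on which Corollary~\ref{coro} will eventually be brought to bear.

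Next I would bring in the nonabelian structure. Let \(L\) be a nonabelian minimal normal subgroup, set \(C=\cent G L\), and pass to \(\overline G=G/C\), which has trivial Fitting subgroup and a unique (nonabelian) minimal normal subgroup isomorphic to \(L\). If both \(p\) and \(q\) divided \(|\overline G|\), then Theorem~B applied to \(\overline G\) would make \(\{p,q\}\) an edge of \(\gammava{\overline G}\), and Lemma~\ref{lemma1} would transport that edge up to \(\gammava G\) --- a contradiction. So at least one of the primes, say \(q\), does not divide \(|\overline G|\); equivalently a Sylow \(q\)-subgroup of \(G\) is already contained in \(C\). In particular every nonabelian simple composition factor of \(G\) whose order is divisible by \(q\) lies inside \(C\), so the failure of \(\{p,q\}\)-solvability coming from \(q\) is entirely localised in \(C\).

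The workhorse for the rest is the decomposition \(LC=L\times C\) (valid since \(L\) is nonabelian) together with Lemma~\ref{products}: for \(g\in L\) and \(c\in C\), the product \(gc\) is vanishing as soon as one factor is, and a short centralizer computation (using \(\cent G{gc}=\cent G g\cap\cent G c\) in \(L\times C\)) shows that \(|(gc)^G|\) is divisible by both \(|g^G|\) and \(|c^G|\). Using this I would combine two independent sources of divisibility. A vanishing element carrying \(p\) is available from the nonabelian side: when \(p\in\pi(\overline G)\), Lemma~\ref{lemma2} applied to \(\overline G\) yields a vanishing \(\overline g\in\overline L\) with \(p\mid|\overline g^{\overline G}|\), whose preimage \(g\in L\) is then vanishing in \(G\) with \(p\mid|g^G|\) (Lemma~\ref{lemma1}); when instead \(p\nmid|\overline G|\), so \(p,q\in\pi(C)\), one keeps a vanishing element in \(L\) by Proposition~\ref{prop1} and seeks \(pq\) inside \(C\) directly. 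Thus, provided one can locate an element of \(C\) whose \(C\)-class size is divisible by \(q\) (respectively by \(pq\)), multiplying it by the vanishing element of \(L\) produces the desired vanishing element of \(G\) with \(pq\) dividing its class size. When \(\fit C=1\) this is automatic, since \cite[Theorem~9]{casolo} furnishes an element of \(C\) whose class size is divisible by the required primes.

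The genuine obstacle is the case \(\fit C\neq 1\), where the abelian normal structure of \(C\) blocks a direct appeal to \cite[Theorem~9]{casolo} and the relevant prime is hidden in a Sylow subgroup that is central in \(C\), hence contributes no \(C\)-class size. The non-central abelian normal \(q\)-subgroup of \(G\) carrying that prime is precisely the input for Corollary~\ref{coro}: one chooses a non-central minimal normal subgroup \(A\) inside it, notes that \(L\) centralizes \(A\) and that the faithful action of \(G/\cent G A\) on the \(q\)-group \(A\) has trivial \(q\)-core, and selects a chief factor \(M/N\) acting nontrivially on \(A\) with \(N=\cent M A\). When \(|M/N|\) is coprime to \(q\), Corollary~\ref{coro} makes every element of \(M\setminus N\) vanishing; in the residual configuration, where a nonabelian chief factor of order divisible by \(q\) acts on \(A\), the defect-zero arguments of Theorem~\ref{defect} and Lemma~\ref{defect2} supply the vanishing. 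In either case each such element acts nontrivially on \(A\), so its class size is divisible by \(q\). The step I expect to be hardest is then to coordinate the two primes in one element: the \(p\)-source sits in the socle \(\overline L\) while the \(q\)-source sits in the part of \(G\) acting on \(A\), so one must assemble an element realising the vanishing \(p\)-divisibility and the \(q\)-divisibility simultaneously, all while treating the exceptional small primes \(p,q\in\{2,3\}\), where Proposition~\ref{prop1} fails and one must fall back on Theorem~\ref{defect}, Lemma~\ref{defect2} and the extension argument of Lemma~\ref{lemma2}. Once such an element is produced, \(pq\) divides a vanishing class size, contradicting the choice of \(G\) and completing the proof.
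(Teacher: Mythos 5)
Your opening reductions are sound and match the paper's: minimality, Theorem~B to force $\fit G\neq 1$, and lifting edges from quotients via Lemma~\ref{lemma1} to conclude that (after passing to $\overline G=G/\cent G L$ for a nonabelian minimal normal subgroup $L$) at least one of the two primes, say $q$, does not divide $|\overline G|$. But the core of the theorem is never actually proved. Your strategy is to manufacture the required element as a product $gc$ with $g\in L$ vanishing and carrying $p$, and $c\in\cent G L$ carrying $q$ in its $\cent G L$-class size, via Lemma~\ref{products}. This cannot work in general: $q\in\V G$ only says that a Sylow $q$-subgroup of $G$ is non-central in $G$; it may well be central in $C=\cent G L$, in which case \emph{no} element of $C$ has $C$-class size divisible by $q$ and the product construction yields nothing. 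You correctly flag this ($\fit C\neq 1$, the prime ``hidden'' in a Sylow subgroup central in $C$) as the genuine obstacle, but your resolution of it is a statement of intent --- ``one must assemble an element realising the vanishing $p$-divisibility and the $q$-divisibility simultaneously... Once such an element is produced'' --- not an argument. That assembly is precisely the content of the theorem.

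Concretely, two things are missing. First, Corollary~\ref{coro} is not applicable in the generality you invoke it: it needs a chief factor $M/N$ with $N=\cent M A$ and $|M/N|$ coprime to $|A|$, and your fallback for the ``residual configuration'' (a chief factor of order divisible by $q$ acting on $A$) does not cover abelian such chief factors, nor the nonabelian ones where $q$-defect-zero characters fail ($q\in\{2,3\}$, Theorem~\ref{defect}). The paper earns the coprimality by a chain of reductions you do not carry out: $A$ is the \emph{unique} abelian minimal normal subgroup, $\frat G=1$, $\fit G=A$, and finally $A$ is a normal \emph{Sylow} $q$-subgroup, so that everything above $\cent G A$ is a $q'$-group. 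Second, the two primes are coordinated in the paper not by multiplying commuting elements but as follows: $G/\cent G A$ is shown to be $p$-nilpotent via \cite[Theorem~A]{dolfi}, a minimal normal subgroup $\overline K$ of it is chosen, and \cite[Theorem~B(i)]{casolo} is applied in contrapositive to the subgroup $K$ to produce a \emph{single} $K$-class of size divisible by $pq$, necessarily contained in $K\setminus\cent G A$ --- where Corollary~\ref{coro} guarantees that \emph{every} element is vanishing, so no separate vanishing certificate for that particular class is needed. Without some substitute for this mechanism, your proof does not close.
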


\begin{proof}
Let \(G\) be a counterexample to the statement, having the smallest possible order. Since \(p\) and \(q\) are nonadjacent vertices of \(\gammava G\), Theorem~B yields \(\fit G\neq 1\). As a consequence, there exists an abelian minimal normal subgroup \(A\) of \(G\). We will  proceed through a number of steps.

\smallskip
\noindent (i) {\sl{$A$ is the unique abelian minimal normal subgroup of \(G\). Moreover, we can assume \(p\in \V{G/A}\) and $q \not\in \V{G/A}$.}}

The factor group \(G/A\) clearly has a nonabelian minimal normal subgroup. If \(p\) and \(q\) are vertices of \(\Gamma(G/A)\), then they cannot be adjacent in \(\gammava{G/A}\), as otherwise they would be adjacent in \(\gammava G\) as well; therefore, by our minimality assumption, \(G/A\) is \(\{p,q\}\)-solvable. But then \(G\) is so, and this is a contradiction.   On the other hand, if both \(p\) and \(q\) are not in \(\V{G/A}\), then \(G/A\) is both \(p\)-nilpotent and \(q\)-nilpotent and \(G\) is not a counterexample. Hence \(\{p,q\}\cap \V{G/A}\) cannot be empty, and we will assume $p \in \V{G/A}$,  $q \not\in \V{G/A}$ (thus \(G\) is \(q\)-solvable). Now, let $B\neq A$ be an abelian minimal normal subgroup of \(G\). The above discussion applies to \(B\) as well, hence \(\{p,q\}\cap \V{G/B}\) contains precisely one element; if this element is \(q\), then $G/B$ is \(p\)-solvable and so is \(G\), a contradiction. But if $q\not\in \V{G/B}$, then both \(G/A\) and \(G/B\) have a central Sylow \(q\)-subgroup, and the same holds for \(G\), which embeds into \((G/A)\times(G/B)\). This would imply \(q\not\in\V G\), which is not the case. Therefore, assuming the existence of a minimal normal subgroup of \(G\) other than \(A\) we get a contradiction, and the claim is proved.

\smallskip
\noindent (ii) {\sl The Frattini subgroup $\frat G$ of \(G\) is trivial, and $\fit G=A$.}

Assume $\frat G \neq 1$, so $A \leq \frat G$. Let $Q$ be a Sylow $q$-subgroup of $G$. Then, as we are assuming \(q\not\in \V{G/A}\), we have $QA \trianglelefteq G$ and hence $Q \unlhd G$ by the Frattini argument. As \(A\) is the unique abelian minimal normal subgroup of \(G\), this clearly implies that \(A\) is a \(q\)-group. Moreover, \(G/A\) has a normal \(q\)-complement \(K/A\) and, again by the Frattini argument, a Hall \(q'\)-subgroup \(K_0\) of \(K\) is actually a normal \(q\)-complement of \(G\), thus \(G=K_0\times Q\). Observe that \(Q\) cannot be abelian, as otherwise it would lie in \(\zent G\), thus we can choose \(x\in Q\setminus\zent Q\); note that \(|x^G|\) is divisible by \(q\), and that \(x\) is a vanishing element of \(Q\) (see~\cite[Theorem B]{INW}), whence clearly a vanishing element of \(G\) as well. Consider now \(y\in K_0\) such that \(|y^G|\) is divisible by \(p\). Then \(xy\) is a vanishing element of \(G\) whose conjugacy class in \(G\) has size divisible by \(pq\), and \(G\) is not a counterexample. We conclude that \(\frat G=1\); in particular, \(\fit G\) is a direct product of abelian minimal normal subgroups of \(G\) (see \cite[III.4.5]{Huppert1}), whence \(\fit G=A\).

\smallskip
\noindent (iii) {\sl{\(A\) is a \(q\)-group}}

For a proof by contradiction, we assume that \(q\nmid |A|\). Hence, if \(Q\) is a Sylow \(q\)-subgroup of \(G\), we get $A = [A,Q] \times \cent AQ$. Note that \([A,Q]\) cannot be trivial, as otherwise the abelian Sylow subgroup \(Q\) of \(G\) would be normal in \(QA\nor G\) and hence central in \(G\). Moreover, every \(G\)-conjugate of \(Q\) is of the form \(Q^a\) for a suitable \(a\in A\), whence $[A,Q]$ is normal in \(G\). As a consequence, we get \([A,Q]=A\) and \(\cent A Q=1\); in particular, as this holds for any Sylow \(q\)-subgroup of \(G\), for every \(x\in A\setminus\{1\}\) we have that \(q\) is a divisor of \(|x^G|\). 

Now, let \(M\) be a nonbelian minimal normal subgroup of \(G\), and set \(C=\cent G M\). If $p$ divides $|G/C|$ then, by Lemma~\ref{lemma2}, there exists \(y\in M\) which is a vanishing element of \(G\) such that \(p\mid|y^G|\). But then, for any \(x\in A\setminus\{1\}\), \(xy\) is a vanishing element of \(G\) with \(pq\mid|(xy)^G|\), a contradiction. We conclude that \(C\) contains a Sylow \(p\)-subgroup \(P\) of \(G\), and clearly \(P\not\leq \zent C\) (as otherwise \(P\) would be normal in \(G\), so \(G\) would be \(p\)-solvable). Moreover, any Sylow \(q\)-subgroup \(Q\) of \(G\) lies in \(C\) because \([Q,G]\leq A\), and therefore \([Q,M]\leq A\cap M=1\); but \(Q\) is not centralized by \(A\), so \(Q\not\leq \zent C\) as well. As a consequence, \(p\) and \(q\) are vertices of \(\Gamma(C)\). Observe that \(\{p,q\}\) must be an edge of \(\Gamma(C)\), as otherwise \(C\) would be \(\{p,q\}\)-solvable by \cite[Theorem~B(i)]{casolo}, and so would be \(G\), because \(G/C\) is a \(\{p,q\}'\)-group. Hence, there exists an element \(c\in C\) such that \(|c^G|\) is divisible by \(pq\). Take any \(y\in M\) which is vanishing in \(G\): we get that \(xy\) is vanishing in \(G\) and \(|(xy)^G|\) is divisible by \(pq\).
  
\smallskip
\noindent (iv) {\sl{Final contradiction}}.

By the previous step, we deduce that \(A\) is a Sylow \(q\)-subgroup of \(G\); in fact, if \(Q\in\syl q G\), then \(Q/A\) is normal in \(G/A\), and so \(Q\leq\fit G=A\). In particular, for every \(x\in G\setminus\cent G A\), we have \(q\mid|x^G|\).
Observe also that \({\overline{G}}=G/\cent GA\) does not have any vanishing element whose \(\overline{G}\)-class size is divisible by \(p\) (otherwise, if \(\overline{x}\) is such an element, then \(x\) would be a vanishing element of \(G\) such that \(pq\mid|x^G|\)); whence, by Theorem~A in \cite{dolfi}, \({\overline{G}}\) is \(p\)-nilpotent. 

Let now \({\overline{K}}=K/\cent G A\) be a minimal normal subgroup of \({\overline{G}}\). Clearly \(K\) does not have a central Sylow \(q\)-subgroup, because \(K>\cent G A\); but \(K\) does not have a central Sylow \(p\)-subgroup as well, since otherwise \(K\) would be \(p\)-solvable and so would be \(G\) (recall that \({\overline{G}}\) is \(p\)-nilpotent). As a consequence, \(p\) and \(q\) are vertices of \(\Gamma(K)\). Moreover, \(K\) must have conjugacy classes of size divisible by \(pq\); otherwise \(K\) would be \(p\)-solvable by \cite[Theorem~B(i)]{casolo} and, as above, \(G\) would be \(p\)-solvable. Note that such conjugacy classes of \(K\) are contained in \(K\setminus\cent G A\). But an application of Corollary~\ref{coro} yields that every element in \(K\setminus\cent G A\) is actually a vanishing element of \(G\), and this is the final contradiction that completes the proof.
\end{proof}

As for the Corollary stated in the Introduction, observe that if \(G\) is a group having a nonabelian minimal normal subgroup, and \(p\) is a prime that is not a complete vertex of \(\gammava G\), then either \(p\not\in\Vv G=\V G\) (and so \(G\) is \(p\)-nilpotent) or \(p\in\Vv G\) is not adjacent in \(\gammava G\) to some other vertex \(q\). In the latter case, Theorem~A  yields the \(p\)-solvability of \(G\).

\section*{Acknowledgements} The fourth author wishes to thank Silvio Dolfi for a couple of very pleasant discussions on the subject of this paper.

\end{document}